\newtheorem{thm}{Theorem}
\newtheorem{lemma}{Lemma}
\newtheorem{false statement}{False statement}
\theoremstyle{definition}
\newtheorem{claim}{Claim}
\newtheorem{claimm}{Claim}
\newtheorem{case}{Case}
\newtheorem{casess}{Case}
\newtheorem{casesss}{Case}
\newtheorem{casessss}{Case}
\newtheorem{case1}{Case}
\newtheorem{case3}{Case}
\newtheorem{case4}{Case}
\newtheorem{subcase1}{Subcase}[case]
\begin{document}
\title{The index of unbalanced signed complete graphs whose negative-edge-induced subgraph is ${K}_{2,2}$-minor free
	\footnote{Supported by Natural Science Foundation of Xinjiang Uygur Autonomous Region (No. 2024D01C41), NSFC (No. 12361071).}}
\author{ Mingsong Qin, Dan Li\thanks{Corresponding author. E-mail: ldxjedu@163.com.} \\
	{\footnotesize College of Mathematics and System Science, Xinjiang University, Urumqi 830046, China}}
\date{}

\maketitle {\flushleft\large\bf Abstract:}
Let $\Gamma=(K_n,H^-)$ be a signed complete graph with the negative edges induced subgraph $H$. According to the properties of the negative-edge-induced subgraph, characterizing the extremum problem of the index of the signed complete graph is a concern in signed graphs. A graph $G$ is called $H$-minor free if $G$ has no minor which is isomorphic to $H$. In this paper, we characterize the extremal signed complete graphs that achieve the maximum and the second maximum index when $H$ is a $K_{2,2}$-minor free spanning subgraph of $K_n$.
\vspace{0.1cm}
\begin{flushleft}
	\textbf{Keywords:} Signed complete graph; Tur\'{a}n problem; Adjacency matrix; Spectral radius
\end{flushleft}
\textbf{AMS Classification:} 05C50; 05C35

\section{Introduction}
Let $G$ be a simple graph with the vertex set $V(G)=(v_1,...,v_n)$ and the edge set $E(G)=(e_1,...,e_n)$. The order and size of $G$ are defined as $|V(G)|$ and $|E(G)|$, respectively. A subgraph $H$ of $G$ is called a spanning subgraph if $|V(H)|=|V(G)|$. The degree of a vertex $v_i$ in $G$ is denoted by $d_G(v_i)$ which is the number of edges incident with $v_i$. If $d_G(v_i)=1$, then vertex $v_i$ is called a pendant vertex. We denote the set of all neighbors of $u$ in $G$ by $N_G(u)$. Let $K_n$ be the complete graph of order $n$. As usual, $P_{k}$, $K_{1,k}$ and $C_k$ denote the path of order $k$, the star of order $k+1$ and the cycle of order $k$, respectively. An underlying graph $G$ with a sign transform $\sigma:E(G)\to\{-1,+1\}$  make up a signed graph $\Gamma=(G,\sigma)$. In sigend graph, edge signs are usually interpreted as $\pm1$. An edge $e$ is positive (resp.negative) if $\sigma(e)=+1$ (resp. $\sigma(e)=-1$). A cycle in $\Gamma$ is said to be positive if it contains an even number of negative edges, otherwise the cycle is negative. $\Gamma=(G,\sigma)$ is balanced if there are no negative cycles, otherwise it is unbalanced. For more details about the notion of signed graphs, we refer to \cite{1}. Signed graph was first introduced in works of Harary \cite{8} and Cartwright and Harary \cite{5}, and the matroids of graphs were extended to matroids of signed graphs by Zaslavsky \cite{19}. Chaiken \cite{6} and Zaslavsky \cite{19} obtaind the Matrix-Tree Theorem for signed graph independently. The theory of signed graphs is a special case of that of gain graphs and of biased graphs \cite{20}. The adjacency matrix of $\Gamma$ is defined as $A(\Gamma)=(a_{ij}^\sigma)$. Then $a_{uv}=\sigma(uv)$ if $u\sim v $, otherwise, $a_{uv}=0$. The eigenvalues of $\Gamma$ are written by $\lambda_1(\Gamma)\geq\lambda_{2}(\Gamma)\geq\cdots\geq\lambda_n(\Gamma)$  with decreasing  order which are the eigenvalues of $A(\Gamma)$ and the index of $\Gamma$ is the $\lambda_1(\Gamma)$. 

Extremal graph theory deals with the problem of determining extremal values and extremal graphs for a given graph invariant in a given set of graphs. It is worth noting that  Brunetti and Stani\'{c} \cite{4} studied the extremal spectral radius among all unbalanced connected signed graphs.  Let $\Gamma=(K_n,H^-)$ be a signed complete graph with the negative edge-induced subgraph $H$. In 2023, Li, Lin and Meng \cite{12} gave the maximum $\lambda_1(A(\Sigma))$  among graphs of type $\Sigma=(K_n,T^-)$, where $T$ is a spanning tree of $K_n$. More results on the index of signed graphs can be found in \cite{3,9}. Let $H$ be a simple graph. A graph $G$ is $H$-free if no subgraph of $G$ is isomorphic to $H$. The classical spectral Tur\'{a}n problem is to determine the maximum spectral radius of a $H$-free graph, which is known as the spectral Tur\'{a}n number of $H$. This problem was originally proposed by Nikiforov \cite{13}. Tur\'{a}n \cite{14} raised and solved the extremal problem for ${K}_{r}$-free graphs with $r\geq 3$. More about spectral Tur\'{a}n problem for unsigned graphs see \cite{t2,t6}. A graph $H$ is a minor of a graph $G$ if $H$ can be obtained from $G$ by deleting edges, deleting vertices and contracting edges. A graph $G$ is called $H$-minor free if $G$ has no minor which is isomorphic to $H$. In particular, Wagner \cite{m3}  proved that a graph is planar if and only if it is $K_5$-minor and $K_{3,3}$-minor free. Similar to Wagner’s result, Ding and Oporowski \cite{m4} showed that a graph is outerplanar if and only if it is $K_4$-minor and $K_{2,3}$-minor free. In spectral extremal graph theory, it is interesting to determine the maximum spectral radius of $H$-minor free graphs for some given $H$. For example, In 2019, Tait \cite{100} determined the maximum spectral radius of $K_r$-minor free graph of order $n$, also presented a conjecture on the extremal graph that achieves the maximum spectral radius among $K_{s,t}$-minor free graphs. In 2022, Zhai and Lin \cite{t5} proved the conjecture for sufficiently large order.

In this paper, we  focus on the spectral Tur\'{a}n problem in signed graphs. Let $K_r^-$ and $C_r^-$ be the sets of all unbalanced signed graphs with underlying graphs $K_r$ and $C_r$, respectively. Given a set $F$ of signed graphs, if a signed graph $\Gamma$ contains no signed subgraph isomorphic to any one in $F$, then $\Gamma$ is called $F$-free. Chen and Yuan \cite{7} gave the spectral Tur\'{a}n number of ${K}_{4}^{-}$. For $r \geq 5$, the $K_r^-$-free unbalanced signed graphs of fixed order $n$ with maximum index (resp. spectral radius $r\leq \frac n2$) have been determined in \cite{k1,k2}. In 2022, Wang, Hou and Li \cite{15} determined the spectral Tur\'{a}n number of ${C}_{3}^{-}$. The ${C}_{4}^{-}$-free unbalanced signed graphs of fixed order with maximum index have been determined by Wang and Lin \cite{16}. Moreover, the ${C}_{2k+1}^{-}$-free unbalanced signed graphs of fixed order $n$ with maximum index have been determined in \cite{18}, where $3 \leq k \leq n/10-1$. 
\begin{figure}
	\centering
	\ifpdf
	\setlength{\unitlength}{1bp}%
	\begin{picture}(242.09, 114.69)(0,0)
		\put(0,0){\includegraphics{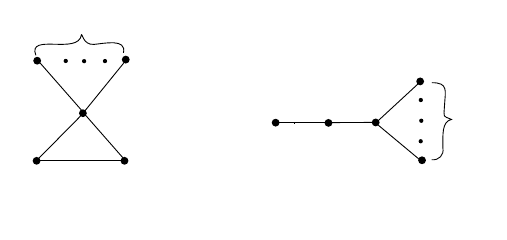}}
		\put(27.77,100.14){\fontsize{11.38}{13.66}\selectfont \textit{$n-3$}}
		\put(35.99,21.14){\fontsize{11.38}{13.66}\selectfont $U_1$}
		\put(151.41,21.67){\fontsize{11.38}{13.66}\selectfont $D_{1,n-3}$}
		\put(44.08,58.34){\fontsize{11.38}{13.66}\selectfont \textit{$v_1$}}
		\put(5.67,34.82){\fontsize{11.38}{13.66}\selectfont \textit{$v_2$}}
		\put(63.53,35.62){\fontsize{11.38}{13.66}\selectfont \textit{$v_3$}}
		\put(221.30,53.97){\fontsize{11.38}{13.66}\selectfont \textit{$n-3$}}
		\put(29.40,0.1){\fontsize{11.38}{13.66}\selectfont $\bf {Fig.   1.}\label{Fig1}$ The graphs $U_1$ and $D_{1,n-3}$.}
	\end{picture}%
	\else
	\setlength{\unitlength}{1bp}%
	\begin{picture}(242.09, 114.69)(0,0)
		\put(0,0){\includegraphics{f1}}
		\put(31.77,100.14){\fontsize{11.38}{13.66}\selectfont \textit{$n-3$}}
		\put(35.99,28.14){\fontsize{11.38}{13.66}\selectfont $U_1$}
		\put(151.41,29.67){\fontsize{11.38}{13.66}\selectfont D\symbol{95}\{1,n-3\}}
		\put(44.08,58.34){\fontsize{11.38}{13.66}\selectfont \textit{$v_1$}}
		\put(5.67,34.82){\fontsize{11.38}{13.66}\selectfont \textit{$v_2$}}
		\put(63.53,35.62){\fontsize{11.38}{13.66}\selectfont \textit{$v_3$}}
		\put(221.30,53.97){\fontsize{11.38}{13.66}\selectfont \textit{$n-3$}}
		\put(22.64,8.12){\fontsize{11.38}{13.66}\selectfont $\bf {Fig.1.}$ The graph $U_1$ and $D(1,n-3)$.}
	\end{picture}%
	\fi
\end{figure}
  Motivated by these works, we are interested in what the extremal graph with the maximum index in the class of signed graphs whose negative edge-induced subgraphs are $H$-minor free. In this paper, we consider this problem  in the unbalanced signed complete graphs whose negative-edge-induced subgraph is ${K}_{2,2}$-minor free. Let $U_1$ $(Fig. \ref{Fig1})$ denote a triangle with all remaining vertices being pendant at the same vertex of the triangle, and $D_{1,n-3}$ $(Fig. \ref{Fig1})$ be obtained by adding $n-3$ pendent vertices to one end vertex of $P_3$. We know that $H\cong U_1$ or $H \cong P_4$ or $H \cong 2P_2$ for signed complete graph $\Gamma=(K_4,H^-)$. Since $(K_4,U_1^-)$ is switching isomorphic $(K_4,P_4^-)$,  $\lambda_1(A((K_4,U_1^-)))=\lambda_1(A((K_4,P_4^-)))> \lambda_1(A((K_4,2P_2^-)))$ by calculation. Thus, we only need to consider $n\geq 5$, and the main result of this paper is as follows.

\begin{thm}\label{thm1}
Let $H$ be a $K_{2,2}$-minor free spanning subgraph of $K_n$ for $n\geq 5$. If $\Gamma$ is not switching isomorphic  $(K_n,U_1^-)$ and $(K_n,D_{1,n-3}^-)$,  then  $\lambda_1(A((K_n,U_1^-)))>\lambda_1(A((K_n,D_{1,n-3}^-)))> \lambda_1(A(\Gamma))$.
\end{thm}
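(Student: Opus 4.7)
The plan is to exploit the fact that $\lambda_1$ is a switching invariant in order to replace each of the two extremal candidates with a simpler switching representative, then to establish the two strict inequalities in turn. Switching $(K_n,U_1^-)$ at the unique vertex $v_1$ of $U_1$-degree $n-1$ flips every edge incident to $v_1$; since in $U_1$ all edges incident to $v_1$ are negative while the only other negative edge is $v_2v_3$, the resulting signed graph has exactly one negative edge, namely $v_2v_3$. Analogously, switching $(K_n,D_{1,n-3}^-)$ at $v_1$ produces a signed complete graph whose negative edges form a $P_3$ with center $v_3$ and leaves $v_1,v_2$. Both reduced signed graphs admit obvious equitable vertex partitions, with classes $\{v_2,v_3\}\cup(V\setminus\{v_2,v_3\})$ of sizes $2$ and $n-2$ in the first case, and $\{v_3\}\cup\{v_1,v_2\}\cup(V\setminus\{v_1,v_2,v_3\})$ of sizes $1,2,n-3$ in the second. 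The index is then the Perron root of an explicit $2\times2$, respectively $3\times3$, quotient matrix; writing out and comparing the characteristic polynomials yields $\lambda_1(A((K_n,U_1^-)))=\tfrac12\!\left((n-4)+\sqrt{n^2+4n-12}\right)$ and the first strict inequality.

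For the second strict inequality, let $\Gamma=(K_n,H^-)$ be any admissible signed complete graph not switching-equivalent to the two extremal graphs. The key structural fact is that every $2$-connected graph on $\ge 4$ vertices contains a cycle of length $\ge 4$, and hence a $K_{2,2}$-minor, so every block of $H$ is either $K_2$ or $K_3$; consequently $H$ is a triangular cactus with pendant trees, and the admissibility hypothesis forces $\Gamma$ to contain at least one negative triangle. Let $x$ be a unit $\lambda_1$-eigenvector of $A(\Gamma)$ and write $\sigma=\mathbf 1^{\!\top}\! x$. The eigenvalue equation at a vertex $v$ of $K_n$ reads
\[
(\lambda_1+1)x_v=\sigma-2\!\!\sum_{u\in N_H(v)}\!\!x_u,
\]
tightly coupling $x_v$ to the local shape of $H$ near $v$. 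I would then establish two local-modification lemmas: a triangle-relocation lemma showing that if $H$ has a triangular block other than the one in $U_1$, one of its edges can be shifted to yield a $K_{2,2}$-minor free spanning subgraph $H'$ with $\lambda_1(A((K_n,H'^-)))>\lambda_1(A(\Gamma))$; and a pendant-slide lemma showing that if $H$ is a forest distinct from $D_{1,n-3}$, a pendant edge of $H$ can be moved onto a higher-weight vertex with strict increase of the index. Iterating these moves drives $\Gamma$ to $(K_n,U_1^-)$ or $(K_n,D_{1,n-3}^-)$, which yields $\lambda_1(A(\Gamma))<\lambda_1(A((K_n,D_{1,n-3}^-)))$.

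The main obstacle will be establishing strict monotonicity of the two local moves in the signed setting, because Perron--Frobenius does not apply here and the $\lambda_1$-eigenvector $x$ is generally not componentwise positive. I would handle this by using the explicit eigenvectors read off from the equitable partitions of the two reduced extremal graphs (which are antisymmetric on the small class and constant on the large class) as test vectors in Rayleigh-quotient comparisons, rather than relying on positivity of $x$ itself. A secondary but nontrivial point is verifying that each local move keeps $H$ within the class of $K_{2,2}$-minor free spanning subgraphs of $K_n$ and keeps $\Gamma$ unbalanced; both follow directly from the block-structure characterization, since exchanging a bridge for a triangle edge at a cut vertex preserves the cactus property. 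Once these ingredients are in place, the theorem follows by induction on the number of triangular blocks of $H$, or on the edit distance of $H$ from $\{U_1,D_{1,n-3}\}$.
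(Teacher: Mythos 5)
Your first half is fine and is in fact a cleaner route to the paper's Lemma \ref{z}: switching $(K_n,U_1^-)$ and $(K_n,D_{1,n-3}^-)$ at the high-degree vertex reduces them to $(K_n,K_2^-)$ and $(K_n,P_3^-)$, and the quotient computation gives the same quadratic factor $\lambda^2-(n-4)\lambda-(3n-7)$ that the paper extracts directly. One caveat: since $A(\Gamma)$ is not a nonnegative matrix, ``the index is the Perron root of the quotient'' is not automatic; you must check, as the paper does via Lemma \ref{l}, that the eigenvalues of $A$ outside the spectrum of the quotient (here $\pm1$) do not exceed the quotient's largest root.

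The second half has two genuine gaps. First, the two local-modification lemmas are precisely where all the difficulty lives, and your proposed mechanism for proving them does not work: to conclude $\lambda_1(A(\Gamma'))>\lambda_1(A(\Gamma))$ from a Rayleigh quotient you must evaluate $A(\Gamma')$ at the top eigenvector of $\Gamma$ itself --- whose entry signs are exactly the unknown quantity --- whereas plugging the explicit eigenvector of an extremal graph into $A(\Gamma)$ only bounds $\lambda_1(A(\Gamma))$ from \emph{below}, which is the wrong direction. The paper's Lemmas \ref{77} and \ref{l9} instead carry out a lengthy sign-by-sign analysis of the eigenvector of $\Gamma$ using the Koledin--Stani\'{c} relocation lemma (Lemma \ref{o}); nothing in your plan replaces that. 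Second, and more structurally: when $H$ contains a triangle your iteration terminates at $U_1$, so it can only yield $\lambda_1(A(\Gamma))<\lambda_1(A((K_n,U_1^-)))$, not the asserted $\lambda_1(A(\Gamma))<\lambda_1(A((K_n,D_{1,n-3}^-)))$, which is strictly stronger since $\lambda_1(A((K_n,D_{1,n-3}^-)))<\lambda_1(A((K_n,U_1^-)))$. For example, $U_2$ (two triangles sharing a vertex) relocates to $U_1$, but the inequality $\lambda_1(A((K_n,U_2^-)))<\lambda_1(A((K_n,D_{1,n-3}^-)))$ requires the separate explicit polynomial comparisons the paper devotes Lemmas \ref{Lem3}, \ref{a} and \ref{y} to (against $U_2$, $H_1(s,t)$ and $H_2(s,t)$, the configurations the relocation argument bottoms out at). Your plan has no counterpart for these. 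Finally, disconnected $H$ (forests and disjoint cacti, handled in the paper's Lemmas \ref{l10} and \ref{l11}) is reachable under your block characterization but is not covered by either of your two moves.
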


The remainder of this paper is organized as follows. In Section \ref{se2}, we will investigate lemmas that will be useful in proving Theorem \ref{thm1}. Furthermore, Section \ref{se3} will review some established results, investigate lemmas pertinent to the proof of Theorem \ref{thm1}, and present its proof.

\section{Preliminaries}\label{se2}
Let $M$ be a real symmetric matrix with block from $M= [M_{ij}]$, and $q_{ij}$ be the average row sum of $M_{ij}$ and the matrix $Q=(q_{ij})$ be the quotient matrix of $M$. Furthermore, $Q$ is referred to as an equitable quotient matrix if every block $M_{ij}$ has a constant row sum. Let Spec$(Q)=\{\lambda_1^{[t_1]},...,\lambda_k^{[t_k]}\}$ be the spectrum of $Q$ with eigenvalue 
$\lambda_i$ with multiplicities $t_i$ for $1 \leq i \leq k$.
\begin{lemma}\label{l}\cite{21}.
	There are two kinds of eigenvalues of the real symmetric matrix $M$.

	(i) The eigenvalues match the eigenvalues of $Q$.
	
	(ii) The eigenvalues of $M$ not in Spec$(Q)$ are unchanged when $\alpha $J is add to block $M_{ij}$ for every $1\leq i,j \leq m$, where  $\alpha$ is any constant. Moreover, $\lambda_1(M)=\lambda_1(Q)$ when $M$ is irreducible and nonnegative.
	
\end{lemma}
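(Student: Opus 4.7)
The plan is to interpret the equitable-quotient structure via the partition matrix $P \in \mathbb{R}^{n \times m}$ whose $i$-th column $p_i$ is the characteristic vector of the $i$-th row-block of $M$. Row-wise, the hypothesis that each block $M_{ij}$ has constant row sum $q_{ij}$ is precisely the matrix identity $MP = PQ$, and the columns $p_1,\ldots,p_m$ of $P$ are pairwise orthogonal and nonzero, so $P$ has full column rank $m$.

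For part (i) I would lift eigenvectors: if $Qy = \lambda y$ with $y \neq 0$, then $M(Py) = PQy = \lambda(Py)$, and $Py \neq 0$ since $P$ has linearly independent columns. Thus every eigenvalue of $Q$ is an eigenvalue of $M$, giving Spec$(Q) \subseteq$ Spec$(M)$ (with multiplicities).

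For part (ii), set $V_1 := \mathrm{Col}(P)$ and $V_2 := V_1^\perp$. The identity $MP = PQ$ makes $V_1$ an $M$-invariant subspace, and the symmetry of $M$ forces $V_2$ to be $M$-invariant as well. Since $M|_{V_1}$ is similar to $Q$ via $P$, its eigenvalues are exactly Spec$(Q)$, so any eigenvalue of $M$ not in Spec$(Q)$ must correspond to an eigenvector in $V_2$. Adding $\alpha J$ to block $M_{ij}$ corresponds to adding the rank-one symmetric perturbation $\alpha(p_i p_j^\top + p_j p_i^\top)$ to $M$ (or $\alpha p_i p_i^\top$ when $i=j$). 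For any $v \in V_2$, orthogonality yields $p_i^\top v = p_j^\top v = 0$, so the perturbation annihilates $v$; hence $M$ and its modification act identically on $V_2$, and the eigenvalues outside Spec$(Q)$ are preserved. For the \emph{moreover} claim, assume $M$ is nonnegative and irreducible: by Perron--Frobenius, $\lambda_1(M)$ admits a strictly positive eigenvector $w$. If $\lambda_1(M)$ were not in Spec$(Q)$, then $w$ would lie in $V_2$, forcing $p_i^\top w = 0$ for all $i$; this is impossible, since each $p_i$ is a nonzero $\{0,1\}$-vector while $w$ has all positive entries. Hence $\lambda_1(M) \in$ Spec$(Q)$, and combining with (i) yields $\lambda_1(M) = \lambda_1(Q)$.

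The argument is essentially clean linear algebra together with Perron--Frobenius, so there is no substantial combinatorial obstacle. The main subtlety is keeping the bookkeeping between eigenvectors of $Q \in \mathbb{R}^{m \times m}$ and their lifts $Py \in \mathbb{R}^n$ consistent (especially with respect to multiplicities), and correctly identifying the rank-one symmetric form of the $\alpha J$ perturbation so that its action on $V_2$ genuinely vanishes.
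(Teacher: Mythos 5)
Your proof is correct. The paper does not prove this lemma at all --- it is quoted from Brouwer and Haemers \cite{21} --- so there is no internal argument to compare against; your construction via the partition matrix $P$ with $MP=PQ$, the orthogonal invariant decomposition $\mathrm{Col}(P)\oplus\mathrm{Col}(P)^{\perp}$, the observation that the $\alpha J$ block perturbations are sums of terms $\alpha\,p_ip_j^{\top}$ annihilating $\mathrm{Col}(P)^{\perp}$, and the Perron--Frobenius step for the \emph{moreover} clause is exactly the standard textbook proof of this result.
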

Let $\Gamma=(G,\sigma)$ be a signed graph and  $\varphi_{\Gamma}(\lambda)=det(\lambda I-A(\Gamma))$ be the characteristic polynomial of $\Gamma$. The matrix $J_{r\times s}$ is all-one matrix of size $r \times s$, and when $r=s$ it is denoted by $J_r$. Also, we use $j_k=(1,\ldots,1)^T\in R^k$.
\begin{lemma}\label{z}
	Let $\Gamma=(K_n,D_{1,n-3}^-)$ and $\Gamma^{\prime}=(K_n,U_1^-)$.  Then $\lambda_1(A(\Gamma))<\lambda_1(A(\Gamma^\prime))$ for $n \geq 5$.
\end{lemma}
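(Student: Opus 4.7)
The plan is to apply Lemma \ref{l} and reduce the comparison $\lambda_1(A(\Gamma))<\lambda_1(A(\Gamma'))$ to a comparison of Perron roots of two small quotient matrices. For $\Gamma'=(K_n,U_1^-)$ I take the equitable partition $\{v_1\}\cup\{v_2,v_3\}\cup\{u_1,\ldots,u_{n-3}\}$ (using the symmetry $v_2\leftrightarrow v_3$), giving a $3\times 3$ quotient matrix $Q'$. For $\Gamma=(K_n,D_{1,n-3}^-)$ I refine to the four-class partition $\{v_1\}\cup\{v_2\}\cup\{v_3\}\cup\{u_1,\ldots,u_{n-3}\}$, obtaining a $4\times 4$ quotient matrix $Q$. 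Looking at the orthogonal complement of each partition-constant subspace, the remaining eigenvalues lie in $\{-1,+1\}$ (the vector $e_{v_2}-e_{v_3}$ yields $+1$ in $\Gamma'$, while sum-zero vectors on the pendants yield $-1$ in both graphs), hence strictly below the Perron value. Lemma \ref{l} then gives $\lambda_1(A(\Gamma))=\lambda_1(Q)$ and $\lambda_1(A(\Gamma'))=\lambda_1(Q')$.

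Next I compute the two characteristic polynomials directly. A single row operation pulls a common factor $\lambda+1$ out of both, leaving
\[
\varphi_{Q'}(\lambda)=(\lambda+1)h(\lambda),\qquad h(\lambda)=\lambda^2-(n-4)\lambda-(3n-7),
\]
\[
\varphi_{Q}(\lambda)=(\lambda+1)g(\lambda),\qquad g(\lambda)=\lambda^3-(n-3)\lambda^2-(2n-3)\lambda+(7n-23).
\]
Setting $\mu=\lambda_1(Q')$, the larger root of $h$, one has $\mu^2=(n-4)\mu+(3n-7)$. Substituting this relation into $g(\mu)$ and simplifying, the coefficient of $\mu$ collapses to zero and one lands on the clean identity $g(\mu)=4(n-4)$, strictly positive for $n\geq 5$.

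Positivity of $g(\mu)$ alone is not enough to conclude $\mu>\lambda_1(Q)$, since a monic cubic is positive both above its largest root and on the interval between its two smaller roots. To rule out the latter, I show that $\mu$ lies past the rightmost critical point of $g$. A direct computation gives $g'(\lambda)=3\lambda^2-2(n-3)\lambda-(2n-3)$ with discriminant $4n^2$, so its roots are $-1$ and $(2n-3)/3$. The inequality $\mu>(2n-3)/3$ reduces, after substituting the explicit formula $\mu=\bigl((n-4)+\sqrt{n^2+4n-12}\bigr)/2$ and squaring, to $(n-3)(n+6)>0$, which holds for $n\geq 5$. Combined with $g(\mu)>0$, this places $\mu$ strictly to the right of the largest root of $g$, i.e.\ $\mu>\lambda_1(Q)$, and the lemma follows. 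The main obstacle is the need to pair the positivity check with the critical-point check on $g'$; the miraculous cancellation $g(\mu)=4(n-4)$ is the identity that makes the whole argument crisp.
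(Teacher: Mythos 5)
Your proof is correct, and at its core it rests on the same computation as the paper's: both arguments come down to the fact that the two essential factors of the characteristic polynomials differ by the constant $4(n-4)$. Indeed, your ``miraculous cancellation'' $g(\mu)=4(n-4)$ is exactly this in disguise: since $(\mu-1)h(\mu)=0$, one has $g(\mu)=g(\mu)-(\mu-1)h(\mu)$, and $g(\lambda)-(\lambda-1)h(\lambda)$ is identically the constant $4n-16$, so no substitution of $\mu^2=(n-4)\mu+3n-7$ is actually needed. The differences are in execution. First, you derive the polynomials from equitable quotient matrices together with an explicit accounting of the non-quotient eigenvalues $\pm 1$, whereas the paper quotes the polynomial of $(K_n,D_{1,n-3}^-)$ from \cite{12} and row-reduces the other; both routes are fine (though you should note in passing that $\lambda_1(Q')>1$, e.g.\ from $h(1)=12-4n<0$, so that the non-quotient eigenvalue $+1$ really is dominated). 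Second, and more substantively, you substitute in the opposite direction: you evaluate the cubic $g$ at the known root $\mu$ of $h$ and obtain a positive value, which, as you rightly observe, does not by itself locate $\mu$ relative to the largest root of $g$ --- hence your additional (correct) analysis of $g'$ and the inequality $\mu>(2n-3)/3$. The paper instead evaluates $f_2(\lambda)=(\lambda-1)h(\lambda)$ at $\lambda_1(A(\Gamma))$, the largest root of $f_1=g$, and gets the negative value $16-4n$; for a monic polynomial a negative value immediately places the evaluation point strictly below the largest root, so the critical-point step disappears entirely. Your route costs one extra (correctly executed) step but is equally valid.
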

\begin{proof}
By \cite{12}, the characteristic polynomial of $\Gamma$ is
\begin{center}
$\varphi_{\Gamma}(\lambda)=(\lambda+1)^{n-3}(\lambda^3+(3-n)\lambda^2+(3-2n)\lambda+7n-23)$.  
\end{center}
For  $\Gamma^{\prime}=(K_n,U_1^-)$, the vertex partition is $V_1=\{v_1\}$, $V_2=\{v_2\}$, $V_3=\{v_3\}$ and $V_4=\{v_4, ... ,v_n\}$. Then $A(\Gamma^{\prime})$ is given by the following
\begin{center}
$A(\Gamma^{\prime})=\begin{bmatrix}0&-1&-1&-j_{n-3}^T\\-1&0&-1&j_{n-3}^T\\-1&-1&0&j_{n-3}^T\\-j_{n-3}&j_{n-3}&j_{n-3}&\left(J-I\right)_{n-3}\end{bmatrix}$.
\end{center}
Hence,
\begin{center}
$\lambda I_n-A(\Gamma^{\prime})=\begin{bmatrix}\lambda&1&1&j_{n-3}^T\\1&\lambda&1&-j_{n-3}^T\\1&1&\lambda&-j_{n-3}^T\\j_{n-3}&-j_{n-3}&-j_{n-3}&\left((\lambda+1)I-J\right)_{n-3}\end{bmatrix}$.
\end{center}
Now, we apply finitely many elementary row and column operations on the matrix $\lambda I_n-A(\Gamma^{\prime})$. First, subtracting the $4$-th row from all the lower rows and adding the $i$-th column to the $4$-th column for $i=n,...,5$. This leads to the following matrix:
\begin{center}
$\begin{gathered}\lambda I_n-A(\Gamma^{\prime})=\begin{bmatrix}\lambda&1&1&n-3&*\\1&\lambda&1&3-n&*\\1&1&\lambda&3-n&*\\1&-1&-1&\lambda-n+4&*\\\bf{0}&\bf{0}&\bf{0}&\bf{0}&\left(\lambda+1\right)I_{n-4}\end{bmatrix}\end{gathered}$.
\end{center} 
Therefore, $\varphi_{\Gamma^\prime}(\lambda)=(\lambda+1)^{n-3}(\lambda-1)(\lambda^2+(4-n)\lambda+7-3n)$. Let $f_{1}(\lambda)=(\lambda^3+(3-n)\lambda^2+(3-2n)\lambda+7n-23)$ and $f_{2}(\lambda)=(\lambda-1)(\lambda^2+(4-n)\lambda+7-3n)$.
Note that $f_{2}(\lambda)-f_{1}(\lambda)=16-4n<0$ for $n \geq 5$, then $\varphi(\Gamma^{\prime},\lambda_1(A(\Gamma)))<0$. This implies that $\lambda_1(A(\Gamma))<\lambda_1(A(\Gamma^\prime))$.
\end{proof}
\begin{figure}[H]
	\centering
	\ifpdf
	\setlength{\unitlength}{1bp}%
	\begin{picture}(296.78, 134.77)(0,0)
		\put(0,0){\includegraphics{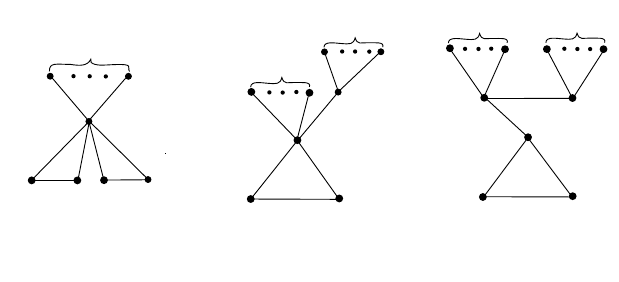}}
		\put(106.90,34.80){\fontsize{11.38}{13.66}\selectfont $v_2$}
		\put(165.50,34.80){\fontsize{11.38}{13.66}\selectfont $v_3$}
		\put(145.86,65.90){\fontsize{11.38}{13.66}\selectfont \textit{$v_1$}}
		\put(278.71,38.16){\fontsize{11.38}{13.66}\selectfont \textit{$v_3$}}
		\put(275.64,82.24){\fontsize{11.38}{13.66}\selectfont \textit{$v_5$}}
		\put(168.24,118.50){\fontsize{11.38}{13.66}\selectfont \textit{$t$}}
		\put(133.99,98.85){\fontsize{11.38}{13.66}\selectfont \textit{$s$}}
		\put(228.09,120.21){\fontsize{11.38}{13.66}\selectfont \textit{$s$}}
		\put(275.71,120.00){\fontsize{11.38}{13.66}\selectfont \textit{$t$}}
		\put(238.00,28.32){\fontsize{11.38}{13.66}\selectfont \textit{$H_2(s,t)$}}
		\put(58.87,8.12){\fontsize{11.38}{13.66}\selectfont $\bf{Fig.2.}\label{Fig2}$ The graphs $U_1$, $H_1(s,t)$ and $H_2(s,t)$.}
		\put(220.57,37.39){\fontsize{11.38}{13.66}\selectfont \textit{$v_2$}}
		\put(257.18,67.34){\fontsize{11.38}{13.66}\selectfont \textit{$v_1$}}
		\put(219.89,82.34){\fontsize{11.38}{13.66}\selectfont \textit{$v_4$}}
		\put(125.33,27.86){\fontsize{11.38}{13.66}\selectfont \textit{$H_1(s,t)$}}
		\put(159.18,81.53){\fontsize{11.38}{13.66}\selectfont \textit{$v_4$}}
		\put(31.10,107.43){\fontsize{11.38}{13.66}\selectfont \textit{$n-5$}}
		\put(36.86,27.61){\fontsize{11.38}{13.66}\selectfont $U_2$}
		\put(46.15,74.59){\fontsize{11.38}{13.66}\selectfont \textit{$v_1$}}
		\put(28.70,39.75){\fontsize{11.38}{13.66}\selectfont \textit{$v_3$}}
		\put(65.47,39.75){\fontsize{11.38}{13.66}\selectfont \textit{$v_5$}}
		\put(5.67,39.75){\fontsize{11.38}{13.66}\selectfont \textit{$v_2$}}
		\put(44.56,39.75){\fontsize{11.38}{13.66}\selectfont \textit{$v_4$}}
	\end{picture}%
	\else
	\setlength{\unitlength}{1bp}%
	\begin{picture}(296.78, 134.77)(0,0)
		\put(0,0){\includegraphics{f2}}
		\put(106.90,34.80){\fontsize{11.38}{13.66}\selectfont v2}
		\put(165.50,37.78){\fontsize{11.38}{13.66}\selectfont v3}
		\put(145.86,65.90){\fontsize{11.38}{13.66}\selectfont \textit{v1}}
		\put(278.71,38.16){\fontsize{11.38}{13.66}\selectfont \textit{v3}}
		\put(275.64,82.24){\fontsize{11.38}{13.66}\selectfont \textit{v5}}
		\put(168.24,118.50){\fontsize{11.38}{13.66}\selectfont \textit{t}}
		\put(133.99,98.85){\fontsize{11.38}{13.66}\selectfont \textit{s}}
		\put(228.09,120.21){\fontsize{11.38}{13.66}\selectfont \textit{s}}
		\put(275.71,120.00){\fontsize{11.38}{13.66}\selectfont \textit{t}}
		\put(247.00,28.32){\fontsize{11.38}{13.66}\selectfont \textit{H2}}
		\put(84.87,8.12){\fontsize{11.38}{13.66}\selectfont Fig.2. The graph H1 and H2.}
		\put(220.57,37.39){\fontsize{11.38}{13.66}\selectfont \textit{v2}}
		\put(257.18,67.34){\fontsize{11.38}{13.66}\selectfont \textit{v1}}
		\put(219.89,82.34){\fontsize{11.38}{13.66}\selectfont \textit{v4}}
		\put(133.33,27.86){\fontsize{11.38}{13.66}\selectfont \textit{H1}}
		\put(159.18,81.53){\fontsize{11.38}{13.66}\selectfont \textit{v4}}
		\put(36.10,107.43){\fontsize{11.38}{13.66}\selectfont \textit{n-5}}
		\put(32.80,27.61){\fontsize{11.38}{13.66}\selectfont U2}
		\put(46.15,74.59){\fontsize{11.38}{13.66}\selectfont \textit{v1}}
		\put(28.70,39.27){\fontsize{11.38}{13.66}\selectfont \textit{v3}}
		\put(65.47,39.05){\fontsize{11.38}{13.66}\selectfont \textit{v5}}
		\put(5.67,39.75){\fontsize{11.38}{13.66}\selectfont \textit{v2}}
		\put(44.56,39.85){\fontsize{11.38}{13.66}\selectfont \textit{v4}}
	\end{picture}%
	\fi
\end{figure}
\begin{lemma}\label{Lem3}
Let $U_2$ be the graph depicted in Fig \ref{Fig2}. If $n\geq 9$, then
\begin{center}
$\lambda_1(A((K_n,D_{1,n-3}^-)))>\lambda_1(A((K_n,U_2^-)))$.
\end{center}
\end{lemma}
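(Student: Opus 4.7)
The plan is to exploit the vertex symmetry of $U_2$ via an equitable partition to compute the characteristic polynomial of $A((K_n, U_2^-))$ explicitly, and then compare its largest root with the largest root $\lambda_*$ of the cubic $f_1(\lambda) := \lambda^3 + (3-n)\lambda^2 + (3-2n)\lambda + 7n - 23$, the factor appearing in the characteristic polynomial of $A((K_n, D_{1,n-3}^-))$ obtained in Lemma \ref{z}.

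Reading $U_2$ from Fig.~\ref{Fig2} as two triangles $v_1v_2v_3$ and $v_1v_4v_5$ glued at $v_1$, together with $n-5$ pendants at $v_1$, the partition $V_1 = \{v_1\}$, $V_2 = \{v_2, v_3\}$, $V_3 = \{v_4, v_5\}$, $V_4 = \{v_6, \dots, v_n\}$ is equitable for $A((K_n, U_2^-))$. Writing down the $4\times 4$ quotient matrix $Q$ and invoking Lemma \ref{l}, I would split the spectrum into $\mathrm{Spec}(Q)$ and the eigenvalues produced by vectors orthogonal to the all-ones vector on each part. Since every off-diagonal block of $A((K_n, U_2^-))$ under this partition is a $\pm J$ block, the latter eigenvalues are exactly those of the diagonal blocks restricted to the orthogonal complement of all-ones: $+1$ with multiplicity $2$ (from the $2 \times 2$ blocks $\left(\begin{smallmatrix}0&-1\\-1&0\end{smallmatrix}\right)$ supported on $V_2$ and $V_3$) and $-1$ with multiplicity $n-6$ (from the $J - I$ block on $V_4$).

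Next, I would evaluate $\det(\lambda I - Q)$ by a short sequence of elementary row/column operations. The $V_2 \leftrightarrow V_3$ symmetry lets me subtract row $3$ from row $2$ and peel off a factor of $(\lambda + 3)$, after which expansion and one further linear factoring should produce
\[
\det(\lambda I - Q) = (\lambda + 3)(\lambda + 1)\bigl(\lambda^2 + (4 - n)\lambda + (11 - 3n)\bigr).
\]
Combined with the non-quotient factors, this makes the largest eigenvalue of $(K_n, U_2^-)$ equal to $\lambda^+ := \tfrac{1}{2}\bigl((n-4) + \sqrt{n^2 + 4n - 28}\bigr)$, since every other eigenvalue is at most $1$.

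To conclude, I would show $\lambda^+ < \lambda_*$ by computing $f_1(\lambda^+)$. Using $(\lambda^+)^2 = (n-4)\lambda^+ + (3n - 11)$ recursively to reduce all higher powers of $\lambda^+$, I anticipate the clean identity $f_1(\lambda^+) = -4(\lambda^+ - n + 3)$, so that $f_1(\lambda^+) < 0$ reduces to $\sqrt{n^2 + 4n - 28} > n - 2$, which holds for $n \geq 5$ and hence for $n \geq 9$. To promote $f_1(\lambda^+) < 0$ into $\lambda^+ < \lambda_*$, I would locate the three real roots of $f_1$ by evaluating it at $-4, -3, 2, n-2, n-1$, verifying that they lie in $(-4, -3)$, $(1, 2)$, and $(n-2, n-1)$ respectively; since $\lambda^+ > n - 3 > 2$, it is strictly above the smaller two roots, so $f_1(\lambda^+) < 0$ forces $\lambda^+ < \lambda_*$. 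The main obstacle I expect is the $4 \times 4$ determinant reduction together with the bookkeeping needed to factor out $(\lambda+3)(\lambda+1)$; once the polynomial identity $f_1(\lambda^+) = -4(\lambda^+ - n + 3)$ is in hand, the spectral comparison is essentially immediate.
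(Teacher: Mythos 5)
Your proof is correct, and every identity you ``anticipate'' does check out: with the partition $\{v_1\},\{v_2,v_3\},\{v_4,v_5\},\{v_6,\dots,v_n\}$ the quotient determinant is indeed $(\lambda+3)(\lambda+1)(\lambda^2+(4-n)\lambda+11-3n)$, the non-quotient spectrum is $1^{[2]},(-1)^{[n-6]}$ (consistent with the paper's full characteristic polynomial $(\lambda+1)^{n-5}(\lambda-1)^2(\lambda+3)(\lambda^2+(4-n)\lambda+11-3n)$), and the reduction $(\lambda^+)^2=(n-4)\lambda^++3n-11$ really does give $f_1(\lambda^+)=-4(\lambda^+-n+3)$. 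The paper reaches the same factorization but by a finer route: it keeps $v_1,\dots,v_5$ as singleton blocks and performs elementary row/column operations on the full $\lambda I-A$, whereas you exploit the $v_2\leftrightarrow v_3$ and $v_4\leftrightarrow v_5$ symmetry to work with a $4\times4$ quotient. The real divergence is in the comparison step. The paper pads both characteristic polynomials to degree-$7$ polynomials $f_1,f_2$ with matching $(\lambda\pm1)$ factors, computes $f_1-f_2=8\lambda^2-16n+56$, and then has to prove $\lambda_1(A((K_n,U_2^-)))>\frac n2-1>\sqrt{2n-7}$ by a separate monotonicity argument before concluding $f_2(\lambda_1)<0$. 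You instead use the fact that $\lambda^+$ is a root of an explicit quadratic to evaluate the cubic $f_1$ at $\lambda^+$ exactly, so the inequality $f_1(\lambda^+)<0$ collapses to $\sqrt{n^2+4n-28}>n-2$, i.e.\ $8n>32$; the only remaining work is locating the three roots of $f_1$ to convert $f_1(\lambda^+)<0$ into $\lambda^+<\lambda_*$, which your sign evaluations at $-4,-3,2,n-2,n-1$ do (the middle root lands in $(-3,2)$, which is all you need since $\lambda^+>n-2>2$). Your version is cleaner and avoids the ad hoc threshold $\frac n2-1$; the paper's subtraction-and-bound template has the advantage that it recycles verbatim in the later lemmas where the top eigenvalue is a root of a quartic or quintic and no closed form is available.
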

\begin{proof}
Notice that the characteristic polynomial of $A((K_n,D_{1,n-3}^-))$ is $(\lambda+1)^{n-3}(\lambda^3+(3-n)\lambda^2+(3-2n)\lambda+7n-23)$ by \cite{12}. For  $(K_n,U_2^-)$, the vertex partition is $V_1=\{v_1\}$, $V_2=\{v_2\}$, $V_3=\{v_3\}$, $V_4=\{v_4\}$, $V_5=\{v_5\}$ and $V_6=\{v_6,...,v_n\}$. Then $A((K_n,U_2^-))$ is given by the following
\begin{center}
	$\begin{gathered}A((K_n,U_2^-))=\begin{bmatrix}0&-1&-1&-1&-1&-j_{n-5}^T\\-1&0&-1&1&1&j_{n-5}^T\\-1&-1&0&1&1&j_{n-5}^T\\-1&1&1&0&-1&j_{n-5}^T\\-1&1&1&-1&0&j_{n-5}^T\\-j_{n-5}&j_{n-5}&j_{n-5}&j_{n-5}&j_{n-5}&\left(J-I\right)_{n-5}\end{bmatrix}\end{gathered}$.
\end{center}
Thus,
\begin{center}
	$\lambda I_n-A((K_n,U_2^-))=\begin{bmatrix}\lambda&1&1&1&1&j_{n-5}^T\\1&\lambda&1&-1&-1&-j_{n-5}^T\\1&1&\lambda&-1&-1&-j_{n-5}^T\\1&-1&-1&\lambda&1&-j_{n-5}^T\\1&-1&-1&1&\lambda&-j_{n-5}^T\\j_{n-5}&-j_{n-5}&-j_{n-5}&-j_{n-5}&-j_{n-5}&\left((\lambda+1)I-J\right)_{n-5}\end{bmatrix}$.
\end{center}
Now, we apply finitely many elementary row and column operations on the matrix $\lambda I_n-A((K_n,U_2^-$ $))$. First, subtracting the $6$-th row from all the lower rows and adding the $i$-th column to the $6$-th column, for $i=n,...,7$. This leads to the following matrix:
\begin{center}
	$\begin{gathered}\lambda I_n-A((K_n,U_2^-))=\begin{bmatrix}\lambda&1&1&1&1&n-5&*\\1&\lambda&1&-1&-1&5-n&*\\1&1&\lambda&-1&-1&5-n&*\\1&-1&-1&\lambda&1&5-n&*\\1&-1&-1&1&\lambda&5-n&*\\1&-1&-1&-1&-1&\lambda+6-n&*\\\bf{0}&\bf{0}&\bf{0}&\bf{0}&\bf{0}&\bf{0}&(\lambda+1)I_{n-6}\end{bmatrix}\end{gathered}$.
\end{center}
Hence, $\varphi((K_n,U_2^-),\lambda)=(\lambda+1)^{n-5}(\lambda-1)^2(\lambda+3)(\lambda^2+(4-n)\lambda+11-3n)$. Let $f_1(\lambda)=(\lambda-1)^2(\lambda+3)(\lambda^2+(4-n)\lambda+11-3n)$ and $f_2(\lambda)=(\lambda+1)^2(\lambda^3+(3-n)\lambda^2+(3-2n)\lambda+7n-23)$. Note that
\begin{center}
	$h(\lambda)=f_1(\lambda)-f_2(\lambda)=8\lambda^2-16n+56$.
\end{center}
The maximal solution of $h(\lambda)=0$ is $\sqrt{2n-7}$ for $n \geq 9$. And
\begin{center}
	$f_1(\frac{n}{2}-1)=(\frac{n-4}{2})^2(\frac{n+4}{2})(-\frac{1}{4}n^2-n+8)$.
\end{center}
Now, let $g_1(n)=-\frac14n^2-n+8$. Notice that $g_1'(n)=-\frac{1}{2}n-1<0$ for $n \geq 9$. Hence, $g_1(n)\leq g_1(9)=-\frac{85}{4}<0$ and $f_1(\frac{n}{2}-1)<0$ for $n\geq 9$. Thus, $\lambda_1(A(K_n,U_2^-))>\frac n2-1>\sqrt{2n-7}$ for $n\geq 9$. Let $\lambda_1=\lambda_1(A((K_n,U_2^-)))$, then
\begin{center}
	$-f_2(\lambda_1)=f_1(\lambda_1)-f_2(\lambda_1)>0$.
\end{center}
This indicates that $\lambda_1(A((K_n,D_{1,n-3}^-)))>\lambda_1(A((K_n,U_2^-)))$.
\end{proof}
Let the matrix $Q$ be the quotient matrix of a real symmetric graph matrix $M$ and $P_{Q}(\lambda)=det(\lambda I-Q)$ denote the characteristic polynomial of $Q$.
\begin{lemma}\label{a}
	Let $s$ and $t$ be non-negative integers such that $s+t=n-4 \geq 5$. If $H_1(s,t)\ncong U_1$, where $H_1(s,t)$ be the graph depicted in $Fig. \ref{Fig2}$, then 
	\begin{center}
		$\lambda_1(A((K_n,D_{1,n-3}^-)))>\lambda_1(A((K_n,H_1(s,t)^-)))$.
	\end{center}
\end{lemma}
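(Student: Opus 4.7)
The plan is to compute $\varphi_{(K_n,H_1(s,t)^-)}(\lambda)$ explicitly via an equitable quotient, compare it at $\lambda_0:=\lambda_1(A((K_n,D_{1,n-3}^-)))$ against the polynomial from Lemma~\ref{z}, show the ``relevant'' degree-$4$ factor of $\varphi_{(K_n,H_1(s,t)^-)}$ is positive at $\lambda_0$, and then close by a monotonicity argument that places all its real roots strictly to the left of $\lambda_0$.

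\textbf{Step 1 (characteristic polynomial).} The partition $V(K_n)=\{v_1\}\cup\{v_2,v_3\}\cup\{v_4\}\cup V_5\cup V_6$, where $V_5,V_6$ are the $s$ and $t$ pendants at $v_1,v_4$, is equitable thanks to the $v_2\leftrightarrow v_3$ symmetry, and by Lemma~\ref{l} its quotient $Q'$ contributes $5$ eigenvalues to $A((K_n,H_1(s,t)^-))$. The remaining $n-5$ eigenvalues are: one $+1$ from the $\mathbf{j}^\perp$ direction inside $\{v_2,v_3\}$, and $-1$ with multiplicity $(s-1)+(t-1)=n-6$ from the internal structure of $V_5$ and $V_6$. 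A direct row/column reduction of $\det(\lambda I-Q')$ factors off one $(\lambda+1)$, yielding
\[
\varphi_{(K_n,H_1(s,t)^-)}(\lambda)=(\lambda+1)^{n-5}(\lambda-1)R(\lambda),\qquad R(\lambda)=a^4-(n-2)a^3-2(n-2)a^2+8t(s+2)a+16st,
\]
where $a=\lambda+1$. (Specialising to $t=0$ recovers the $U_1$ polynomial of Lemma~\ref{z}, a useful cross-check.)

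\textbf{Step 2 (evaluation at $\lambda_0$).} The cubic factor $g$ of $\varphi_{(K_n,D_{1,n-3}^-)}$ rewrites cleanly as $g(\lambda)=a^3-na^2+8(n-3)$, and $g(n-2)=-(n-5)^2\le 0$, so $a_0:=\lambda_0+1\ge n-1$. Using the defining relation $a_0^3=na_0^2-8(n-3)$ (iterated once to reduce $a_0^4$ as well) together with $s+t=n-4$, the quantity $R(\lambda_0)$ telescopes to
\[
R(\lambda_0)=4\bigl[a_0^2+2(s+1)(t-1)a_0+4(s-1)(t-1)-8\bigr].
\]
For $t\ge 1$ the only case in which $(s-1)(t-1)<0$ is $s=0$, $t\ge 2$, where the bracket regroups as $a_0^2+2(t-1)(a_0-2)-8\ge a_0^2-8$ (using $a_0\ge 2$); in every other case both $(s+1)(t-1)$ and $(s-1)(t-1)$ are non-negative and the same lower bound is immediate. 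Hence $R(\lambda_0)/4\ge a_0^2-8\ge(n-1)^2-8>0$ throughout the admissible range.

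\textbf{Step 3 (monotonicity and conclusion).} Since $8t(s+2)\ge 0$, one has $R'(\lambda)\ge a\bigl[4a^2-3(n-2)a-4(n-2)\bigr]$; at $a=n-1$ the bracket equals $n^2-3n+6>0$, and the bracket is increasing in $a$ past $3(n-2)/8$, so $R'(\lambda)>0$ throughout $[\lambda_0,\infty)$. Combined with Step~2 this forces $R(\lambda)>0$ for all $\lambda\ge\lambda_0$, so $R$ has no real root in $[\lambda_0,\infty)$. The Rayleigh quotient $\mathbf{1}^\top A\mathbf{1}/n=n-5$ (using $|E(H_1(s,t))|=n$) gives $\lambda_1(A((K_n,H_1(s,t)^-)))\ge n-5>1$ for $n\ge 9$, so the index is the largest real root of $R$ and not one of the $\pm 1$ factors; therefore $\lambda_1(A((K_n,H_1(s,t)^-)))<\lambda_0=\lambda_1(A((K_n,D_{1,n-3}^-)))$. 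The hardest point is the telescoping in Step~2: the clean quadratic in $a_0$ only emerges after carefully iterating $a_0^3=na_0^2-8(n-3)$ and exploiting $s+t=n-4$, and the bookkeeping must be tracked attentively to avoid sign errors.
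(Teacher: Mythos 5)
Your proof is correct, and it takes a genuinely different route from the paper's. I verified the key computations: your quartic $R$, expanded in $\lambda$ with $s=n-4-t$, coincides exactly with the paper's factor $\lambda^4-(n-6)\lambda^3-(5n-16)\lambda^2+(8st-7s+9t-10)\lambda+24st-3s+13t-5$; the reduction of $R(\lambda_0)$ modulo $a_0^3=na_0^2-8(n-3)$ to $4\bigl[a_0^2+2(s+1)(t-1)a_0+4(s-1)(t-1)-8\bigr]$ is right; and so are the bounds $a_0\ge n-1$ (from $g(n-2)=-(n-5)^2\le 0$) and the positivity of $R'$ on $[\lambda_0,\infty)$. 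The paper computes the same quartic (via a finer six-part equitable quotient) but then argues by shifting: it shows $\lambda_1$ strictly increases under $(s,t)\mapsto(s\pm1,t\mp1)$ according to the sign of $s-t$, reduces to the extremal configurations $H_1(n-5,1)$ and $H_1(0,n-4)$, and disposes of these by further pairwise polynomial comparisons at $\lambda=\frac n2-1$, invoking the auxiliary graph $U_2$ and Lemma 3 along the way. Your single evaluation at $\lambda_0$ treats all admissible $(s,t)$ uniformly, avoids Lemma 3 and the entire case analysis, and is considerably shorter; the paper's route, by contrast, yields as by-products the explicit orderings among $H_1(n-5,1)$, $H_1(1,n-5)$, $H_1(0,n-4)$ and $U_2$, together with the characteristic polynomial of $(K_n,H_1(0,n-4)^-)$, which are cited again later (in the proofs of Lemmas 5 and 7), so anyone replacing the paper's proof with yours would need to extract those separately. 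One point you should make explicit: the hypothesis $H_1(s,t)\ncong U_1$ is precisely the statement $t\ge 1$ (since $H_1(n-4,0)\cong U_1$, as your own $t=0$ cross-check shows), and this is what licenses restricting the sign analysis in Step 2 to $t\ge 1$; the restriction is not cosmetic, because at $t=0$ the coefficient $2(s+1)(t-1)=-2(n-3)$ makes the bracket negative near $a_0\approx n$, consistent with Lemma 2's conclusion that $U_1$ beats $D_{1,n-3}$.
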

\begin{proof}
Assume that $s, t \geq 1$. We give the $A((K_n,H_1(s,t)^-))$ and its corresponding quotient matrix $Q_3(s,t)$ by the vertex partition $V_1=\{v_1\}$, $V_2=\{v_2\}$, $V_3=\{v_3\}$, $V_4=\{v_4\}$, $V_5=N_{H_1(s,t)}(v_1)\backslash\{v_2,v_3,$ $v_4\}$ and $V_6=N_{H_1(s,t)}(v_4)\backslash\{v_1\}$ as follows
\begin{center}
$\begin{gathered}A((K_n,H_1(s,t)^-))=\begin{bmatrix}0&-1&-1&-1&-j_s^T&j_t^T\\-1&0&-1&1&j_s^T&j_t^T\\-1&-1&0&1&j_s^T&j_t^T\\-1&1&1&0&j_s^T&-j_t^T\\-j_s&j_s&j_s&j_s&\left(J-I\right)_s&J_{s\times t}\\j_t&j_t&j_t&-j_t&J_{t\times s}&\left(J-I\right)_t\end{bmatrix}\end{gathered}$,
\end{center}
and
\begin{center}
 $Q_3(s,t)=\begin{bmatrix}0&-1&-1&-1&-s&t\\-1&0&-1&1&s&t\\-1&-1&0&1&s&t\\-1&1&1&0&s&-t\\-1&1&1&1&s-1&t\\1&1&1&-1&s&t-1\end{bmatrix}$.
\end{center}
Note that the characteristic polynomial of $Q_3(s,t)$ is
\begin{align*}
P_{Q_3(s,t)}(\lambda)&=(\lambda-1)(\lambda+1)(\lambda^4-(n-6)\lambda^3-(5n-16)\lambda^2+(8st-7s+9t-10)\lambda\\&+24st-3s+13t-5).
\end{align*}
Adding $\alpha $J to the blocks of $A((K_n,H_1(s,t)^-))$, where $\alpha $ is constant. Then $A((K_n,H_1(s,t)^-))$ will be 
\begin{center}
$A_3=\begin{bmatrix}0&0&0&0&\bf{0}^T&\bf{0}^T\\0&0&0&0&\bf{0}^T&\bf{0}^T\\0&0&0&0&\bf{0}^T&\bf{0}^T\\0&0&0&0&\bf{0}^T&\bf{0}^T\\\bf{0}&\bf{0}&\bf{0}&\bf{0}&-I_s&\bf{0}\\\bf{0}&\bf{0}&\bf{0}&\bf{0}&\bf{0}&-I_t\end{bmatrix}$.
\end{center}
Since $\lambda_1(Q_3(s,t))>0$ and Spec$(A_3)$$=\begin{Bmatrix}-1^{[n-4]},0^{[4]}\end{Bmatrix}$,  $\lambda_1(A((K_n,H_1(s,t)^-)))=\lambda_1(Q_3(s,t))$.\\
Notice that 
\begin{center}
$P_{Q_3(s,t)}\left(\lambda\right)-P_{Q_3(s+1,t-1)}(\lambda)=8\left(s-t+3\right)\lambda+24\left(s-t\right)+40$.
\end{center}
Next, we will discuss in two cases.

\begin{case}\label{case1}
 $s \geq t$. 
\end{case}
Note that
$P_{Q_3(s,t)}(\lambda)-P_{Q_3(s+1,t-1)}(\lambda)>0$ for $\lambda >0$. Then $P_{Q_3(s+1,t-1)}(\lambda_1(Q_3(s,t)))<0$, it means that  $\lambda_1(Q_3(s,t))<\lambda_1(Q_3(s+1,t-1))$. Thus, 
$\lambda_1(A((K_n,H_1(s,t)^-)))<\lambda_1(A((K_n,H_1(s+1,t-1)^-)))$. By repeatedly using this operation, we can obtain that
\begin{center}
$\lambda_1(A((K_n,H_1(s,t)^-))) \leq \lambda_1(A((K_n,H_1(n-5,1)^-)))$,
\end{center}
the equality holds if and only if $s=n-5$, $t=1$.

Next, we will show that $\lambda_1(A((K_n,U_2^-)))>\lambda_1(A((K_n,H_1(n-5,1)^-)))$.
Notice that $P_{Q_3(n-5,1)}$ $(\lambda)=(\lambda-1)(\lambda+1)(\lambda^4-(n-6)\lambda^3-(5n-16)\lambda^2+(n-6)\lambda+21n-97)$ and $\lambda_1(A((K_n,H_1(n-5,1)^-)))$ is the largest root of $P_{Q_3(n-5,1)}(\lambda)=0$. Clearly, $\varphi((K_n,U_2^-),\lambda)=(\lambda+1)^{n-5}(\lambda-1)^2(\lambda+3)(\lambda^2+(4-n)\lambda+11-3n)$. Let $f_3(\lambda)=(\lambda-1)^2(\lambda+1)(\lambda+3)(\lambda^2+(4-n)\lambda+11-3n)$. Hence,
\begin{center}
	$h(\lambda)=P_{Q_3(n-5,1)}(\lambda)-f_3(\lambda)=4(\lambda+1)(\lambda-1)((n-4)\lambda+3n-16)$.
\end{center}
The maximal solution of $h(\lambda)=0$ is $1$ for $n \geq 9$. And
\begin{center}
	$P_{Q_3(n-5,1)}(\frac{n}{2}-1)=(\frac{n-4}{2})(\frac{n}{2})(-\frac{1}{16}n^4-\frac{1}{4}n^3+5n^2+4n-80)$.
\end{center}
Set $f(n)=-\frac1{16}n^4-\frac14n^3+5n^2+4n-80$. It is evidently that $f'(n)=-\frac14n^3-\frac34n^2+10n+4$, $f^{\prime\prime}(n)=-\frac34n^2-\frac32n+10$ and $f^{\prime\prime\prime}(n)=-\frac32n-\frac32$. We observe that $f^{\prime\prime\prime}(n)=-\frac32n-\frac32<0$ for $n \geq 9$. Hence, $f''(n)\le f''(9)=-\frac{257}{4}<0$, and thus $f'(n)\leq f'(9)=-149<0$. This means that $f(n)$ is a monotone decreasing for $n \geq 9$ such that $f(n)\leq f(9)=-\frac{3701}{16}<0$, and then $P_{Q_3(n-5,1)}(\frac n2-1)<0$. 
Thus, $\lambda_{1}(A((K_{n},H_{1}(n-5,1)^{-})))>\frac{n}{2}-1>1$ for $n \geq 9$. Let $\lambda_{1}=\lambda_{1}(A((K_{n},H_{1}(n-5,1)^{-})))$, then
\begin{center}
	$-f_3(\lambda_1)=P_{Q_3(n-5,1)}(\lambda_1)-f_3(\lambda_1)>0$.
\end{center}
This indicates that $\lambda_1(A((K_n,U_2^-)))>\lambda_1(A((K_n,H_1(n-5,1)^-)))$. Thus, $\lambda_1(A((K_n,D_{1,n-3}^-)))>\lambda_1(A((K_n,H_1(s,t)^-)))$ by Lemma \ref {Lem3}.

\begin{case}
 $s<t$. 
\end{case}

\noindent{\bf{${\mbox{subcase 2.1. }}$}}
 $t-s=1$, i.e., $H_1(s,t)\cong H_1(\frac{n-5}{2},\frac{n-3}{2})$. Note that $P_{Q_3(s,t)}(\lambda)-P_{Q_3(s+1,t-1)}(\lambda)=8(s-t+3)\lambda+24(s-t)+40>0$ for $\lambda >0$. Let $\lambda_1=\lambda_1(A((K_n,H_1(s,t)^-)))$, then $P_{Q_3(s+1,t-1)}$ $(\lambda_1)<0$. Thus, $\lambda_1(A((K_n,H_1(\frac{n-5}{2},\frac{n-3}{2})^-)))<\lambda_1(A((K_n,H_1(\frac{n-3}{2},\frac{n-5}{2})^-)))$. Note that $\lambda_1(A((K_n,H_1(\frac{n-3}{2}$ $,\frac{n-5}{2})^-)))$ $<\lambda_1(A((K_n,D_{1,n-3}^-)))$ by Case \ref{case1}. Hence, $\lambda_1(A((K_n,H_1(\frac{n-5}{2},\frac{n-3}{2})^-)))<\lambda_1(A((K_n,$ $D_{1,n-3}^-)))$.

\noindent{\bf{${\mbox{subcase 2.2. }}$}}
 $t-s=2$, i.e., $H_1(s,t)\cong H_1(\frac{n-6}{2},\frac{n-2}{2})$. Note that $P_{Q_{3(s,t)}}\left(\lambda\right)-P_{Q_{3(s+1,t-1)}}(\lambda)=8(s-t+3)$ $\lambda+24(s-t)+40>0$ for $\lambda >1$. Let $\lambda_1=\lambda_1(A((K_n,H_1(s,t)^-)))$, then $P_{Q_3(s+1,t-1)}(\lambda_1)<0$. Thus, $\lambda_1(A((K_n,H_1(\frac{n-6}{2},\frac{n-2}{2})^-)))<\lambda_1(A((K_n,H_1(\frac{n-4}{2},\frac{n-4}{2})^-)))$. Since $\lambda_1(A((K_n,H_1(\frac{n-4}{2},\frac{n-4}{2})^-$ $)))$ $<\lambda_1(A((K_n,D_{1,n-3}^-)))$ by Case \ref{case1}, $\lambda_1(A((K_n,H_1(\frac{n-6}{2},\frac{n-2}{2})^-)))<\lambda_1(A((K_n,D_{1,n-3}^-)))$.

\noindent{\bf{${\mbox{subcase 2.3. }}$}}
 $t-s\geq 3$. Similarly, notice that $P_{Q_3(s,t)}(\lambda)-P_{Q_3(s-1,t+1)}(\lambda)=(8t-8s-8)\lambda+24\left(t-s\right)+8>0$ for $\lambda >0$ and $P_{Q_3(s-1,t+1)}(\lambda_1(Q_3(s,t)))<0$. Hence,  $\lambda_1(Q_3(s,t))<\lambda_1(Q_3(s-1,t+1))$, and then
$\lambda_1(A((K_n,H_1(s,t)^-)))<\lambda_1(A((K_n,H_1(s-1,t+1)^-)))$.  By repeatedly using this operation, we can get that
\begin{center}
	$\lambda_1(A((K_n,H_1(s,t)^-))) \leq \lambda_1(A((K_n,H_1(1, n-5)^-)))$,
\end{center}
the equality holds if and only if $s=1$, $t=n-5$.

Next, we will assert that $\lambda_1(A((K_n,H_1(1,n-5)^-)))<\lambda_1(A((K_n,H_1(0,n-4)^-)))$.
Note that
\begin{align*}
 P_{Q_3(1,n-5)}(\lambda)&=(\lambda-1)(\lambda+1)(\lambda^4-(n-6)\lambda^3-(5n-16)\lambda^2+(17n-102)\lambda\\&+37n-193). 
\end{align*}
By appropriatly marking the vertices of $(K_n,H_1(0,n-4)^-)$, one can have
\begin{center}
$A((K_n,H_1(0,n-4)^-))=\begin{bmatrix}0&-1&-1&-1&j_{n-4}^T\\-1&0&-1&1&j_{n-4}^T\\-1&-1&0&1&j_{n-4}^T\\-1&1&1&0&-j_{n-4}^T\\j_{n-4}&j_{n-4}&j_{n-4}&-j_{n-4}&\left(J-I\right)_{n-4}\end{bmatrix}$.
\end{center}
Then
\begin{center}
$\lambda I_n-A((K_n,H_1(0,n-4)^-))=\begin{bmatrix}\lambda&1&1&1&-j_{n-4}^T\\1&\lambda&1&-1&-j_{n-4}^T\\1&1&\lambda&-1&-j_{n-4}^T\\1&-1&-1&\lambda&j_{n-4}^T\\-j_{n-4}&-j_{n-4}&-j_{n-4}&j_{n-4}&\left(\left(\lambda+1\right)I-J\right)_{n-4}\end{bmatrix}$.
\end{center}
Now, we apply finitely many elementary row and column operations on the matrix $\lambda I_n-A((K_n,H_1$ $(0,n-4)^-))$. First, subtracting the $5$-th row from all the lower rows and adding the $i$-th column to the $5$-th column, for $i=n,...,6$. This leads to the following matrix:
\begin{center}
$\lambda I_n-A((K_n,H_1(0,n-4)^-))=\begin{bmatrix}\lambda&1&1&1&4-n&*\\1&\lambda&1&-1&4-n&*\\1&1&\lambda&-1&4-n&*\\1&-1&-1&\lambda&n-4&*\\-1&-1&-1&1&\lambda-n+5&*\\\bf{0}&\bf{0}&\bf{0}&\bf{0}&\bf{0}&(\lambda+1)I_{n-5}\end{bmatrix}$.
\end{center}
Therefore, $\varphi((K_n,H_1(0,n-4)^-),\lambda)=(\lambda+1)^{n-4}(\lambda-1)(\lambda^3+(5-n)\lambda^2+(11-4n)\lambda+13n-57)$. Let $f_2(\lambda)=(\lambda+1)^2(\lambda-1)(\lambda^3+(5-n)\lambda^2+(11-4n)\lambda+13n-57)$. Set
\begin{center}
$h(\lambda)=P_{Q_3(1,n-5)}(\lambda)-f_2(\lambda)=8(\lambda+1)(\lambda-1)((n-7)\lambda+3n-17)$,
\end{center}
and the maximal solution of $h(\lambda)=0$ is $1$ for $n \geq 9$. And
\begin{center}
$P_{Q_3(1,n-5)}(\frac{n}{2}-1)=(\frac{n-4}{2})(\frac{n}{2})(-\frac{1}{16}n^4-\frac{1}{4}n^3+13n^2-44n-80)$.
\end{center}
Set $g(n)=-\frac{1}{16}n^4-\frac{1}{4}n^3+13n^2-44n-80$. It is easy to see that $g'(n)=-\frac14n^3-\frac34n^2+26n-44$, $g''(n)=-\frac{3}{4}n^2-\frac{3}{2}n+26$ and $g'''(n)=-\frac32n-\frac32$. Note that $g'''(n)<0$ for $n \geq 9$, then $g''(n)\le g''(9)=-\frac{193}{4}<0$, and thus $g'(n)\leq g'(9)=-53<0$. Hence, $g(n)$ is a monotone decreasing function for $n \geq 9$ and $g(n)\leq g(9)=-\frac{245}{16}<0$, and then $P_{Q_3(1,n-5)}(\frac{n}{2}-1)<0$. Hence, $\lambda_1(Q_3(1,n-5))>\frac n2-1>1$ for $n \geq 9$. Let $\lambda_1=\lambda_1(Q_3(1,n-5))$, so
\begin{center}
$-f_2(\lambda_1)=P_{Q_3(1,n-5)}(\lambda_1)-f_2(\lambda_1)>0.$
\end{center}
This indicates that $\lambda_1(A((K_n,H_1(1,n-5)^-)))<\lambda_1(A((K_n,H_1(0,n-4)^-)))$.

Finally, we will assert that $\lambda_1(A((K_n,H_1(n-5,1)^-)))>\lambda_1(A((K_n,H_1(0,n-4)^-)))$. Let $f_4(\lambda)=(\lambda+1)^2(\lambda-1)(\lambda^3+(5-n)\lambda^2+(11-4n)\lambda+13n-57)$. Note that
\begin{center}
	$h(\lambda)=f_4(\lambda)-P_{Q_3(n-5,1)}(\lambda)=8(n-5)(\lambda-1)^2(\lambda+1)$.
\end{center}
The maximal solution of $h(\lambda)=0$ is $1$ for $n \geq 9$, and
$\varphi((K_n,H_1(0,n-4)^-),\frac n2-1)=(\frac n2)^{n-4}(\frac{n-4}2)$ $(-\frac18n^3-\frac12n^2+18n-64)$. Similarly, we obtain that $f(n)=-\frac{1}{8}n^3-\frac{1}{2}n^2+18n-64<0$ for $n \geq 9$. Then $\varphi((K_n,H_1(0,n-4)^-),\frac n2-1)<0$. Hence, $\lambda_1(A((K_n,H_1(0,n-4)^-)))>\frac n2-1>1$ for $n \geq 9$. let $\lambda_1=\lambda_1(A((K_n,H_1(0,n-4)^-)))$, then
\begin{center}
	$-P_{Q_3(n-5,1)}(\lambda_1)=f_4(\lambda_1)-P_{Q_3(n-5,1)}(\lambda_1)>0$.
\end{center}
This indicates that $\lambda_1(A((K_n,H_1(n-5,1)^-)))>\lambda_1(A((K_n,H_1(0,n-4)^-)))$. Thus, $\lambda_1(A((K_n,$ $D_{1,n-3}^-)))>\lambda_1(A((K_n,H_1(s,t)^-)))$ by Case \ref{case1}.

Hence, $\lambda_1(A((K_n,$ $D_{1,n-3}^-)))>\lambda_1(A((K_n,H_1(s,t)^-)))$. The proof is completed.
\end{proof}
\begin{lemma}\label{y}
Let $s$ and $t$ be non-negative integers such that $s+t=n-5\geq 4$, and $H_2(s,t)$ be the graph depicted in $Fig. \ref{Fig2}$. Then 
	\begin{center}
		$\lambda_1(A((K_n,D_{1,n-3}^-)))>\lambda_1(A((K_n,H_2(s,t)^-)))$.
	\end{center}
\end{lemma}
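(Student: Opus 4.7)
The plan is to imitate the proof of Lemma \ref{a} almost verbatim. First, I would use an equitable vertex partition of $(K_n, H_2(s,t)^-)$ consisting of singletons for the five ``core'' vertices $v_1,\dots,v_5$ of $H_2(s,t)$ together with the two pendant-orbits of sizes $s$ and $t$; this yields a $7\times 7$ equitable quotient matrix $Q_4(s,t)$. Direct expansion gives the characteristic polynomial $P_{Q_4(s,t)}(\lambda)$. By the $\alpha J$ trick of Lemma \ref{l}, every eigenvalue of $A((K_n,H_2(s,t)^-))$ outside $\mathrm{Spec}(Q_4(s,t))$ lies in $\{0,-1\}$, and because the index of $A((K_n,H_2(s,t)^-))$ is strictly positive we conclude $\lambda_1(A((K_n,H_2(s,t)^-))) = \lambda_1(Q_4(s,t))$.

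Next, I would form the difference $P_{Q_4(s,t)}(\lambda)-P_{Q_4(s+1,t-1)}(\lambda)$. Since only the two pendant blocks change, this difference is a low-degree polynomial whose coefficients are linear in $(s-t)$, with a sign that is determined on $\lambda$ larger than some small threshold. The standard root-location argument $P_{Q_4(s+1,t-1)}(\lambda_1(Q_4(s,t)))<0 \Rightarrow \lambda_1(Q_4(s,t))<\lambda_1(Q_4(s+1,t-1))$ then yields monotonicity of the index along the line $s+t=n-5$. Mirroring the structure of Lemma \ref{a}, I would split into Case $s\geq t$ and Case $s<t$, the latter with sub-cases $t-s=1$, $t-s=2$, and $t-s\geq 3$, each pushing $(s,t)$ towards the appropriate endpoint of $\{s+t=n-5,\ s,t\geq 0\}$.

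Monotonicity leaves two endpoint comparisons: $(K_n,H_2(n-5,0)^-)$ and $(K_n,H_2(0,n-5)^-)$ against $(K_n,D_{1,n-3}^-)$. At least one of these endpoint graphs should coincide (up to relabeling) with $U_2$ from Fig.~\ref{Fig2}, in which case Lemma \ref{Lem3} already supplies the required inequality. For the remaining endpoint, I would carry out the same closing argument as in Lemma \ref{a}: compute its characteristic polynomial by elementary row and column reduction on $\lambda I_n-A$; subtract it from a suitable $(\lambda+1)$-multiple of $\varphi_{(K_n,D_{1,n-3}^-)}(\lambda)$ to obtain $h(\lambda)$, whose largest real root is a small constant (in the pattern of the earlier lemmas, either $1$ or $\sqrt{2n-7}$); evaluate the characteristic polynomial at $\lambda=n/2-1$ and verify negativity by exhibiting a polynomial in $n$ and running the three-successive-derivatives monotonicity argument that was used twice already in the excerpt. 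This forces $\lambda_1>n/2-1$, which exceeds the largest root of $h$, so $-\varphi_{(K_n,D_{1,n-3}^-)}(\lambda_1)=h(\lambda_1)>0$ and the strict inequality follows.

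The main obstacle is the book-keeping around $Q_4(s,t)$: the $7\times 7$ quotient matrix produces a degree-$7$ characteristic polynomial whose coefficients depend quadratically on $s,t$, and the Case $s<t$ monotonicity requires watching carefully whether the difference polynomial is positive for all $\lambda>0$ or only for $\lambda>1$, which dictates the sub-case split. One must also confirm unambiguously which endpoint coincides with $U_2$ before invoking Lemma \ref{Lem3}; if neither endpoint matches $U_2$ directly, then both endpoints require an independent polynomial comparison of the type described above, essentially doubling the closing calculation but not altering the overall strategy.
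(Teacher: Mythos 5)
Your proposal follows essentially the same route as the paper: the $7\times 7$ equitable quotient matrix $Q_4(s,t)$, the $\alpha J$ reduction of Lemma \ref{l}, the difference-polynomial monotonicity along $s+t=n-5$ with the same split into $s\geq t$ and $s<t$ (sub-cases $t-s=1,2,\geq 3$), and two endpoint comparisons. One small correction: neither endpoint coincides with $U_2$ (both $H_2(n-5,0)$ and $H_2(0,n-5)$ contain only the single triangle $v_1v_2v_3$), so it is your stated fallback that applies; the paper observes $H_2(n-5,0)\cong H_1(0,n-4)$ and closes via Lemma \ref{a} rather than Lemma \ref{Lem3}.
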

\begin{proof}
Assume that $s, t \geq 1$. We give the $A((K_n,H_2(s,t)^-))$ and its corresponding quotient matrix $Q_4(s,t)$ by the vertex partition $V_1=\{v_1\}$, $V_2=\{v_2\}$, $V_3=\{v_3\}$, $V_4=\{v_4\}$, $V_5=\{v_5\}$, $V_6=N_{H_{2(s,t)}}(v_4)\backslash\{v_1,v_5\}$ and $V_7=N_{H_2(s,t)}(v_5)\backslash\{v_4\}$ as follows
\begin{center}
$\begin{gathered}A((K_n,H_2(s,t)^-))=\begin{bmatrix}0&-1&-1&-1&1&j_s^T&j_t^T\\-1&0&-1&1&1&j_s^T&j_t^T\\-1&-1&0&1&1&j_s^T&j_t^T\\-1&1&1&0&-1&-j_s^T&j_t^T\\1&1&1&-1&0&j_s^T&-j_t^T\\j_s&j_s&j_s&-j_s&j_s&\left(J-I\right)_s&J_{s\times t}\\j_t&j_t&j_t&j_t&-j_t&J_{t\times s}&\left(J-I\right)_t\end{bmatrix}\end{gathered}$,
\end{center}
and
\begin{center}
 $Q_4(s,t)=\begin{bmatrix}0&-1&-1&-1&1&s&t\\-1&0&-1&1&1&s&t\\-1&-1&0&1&1&s&t\\-1&1&1&0&-1&-s&t\\1&1&1&-1&0&s&-t\\1&1&1&-1&1&s-1&t\\1&1&1&1&-1&s&t-1\end{bmatrix}$.
\end{center}
Notice that the characteristic polynomial of $Q_4(s,t)$ is
\begin{align*}
P_{Q_{4(s,t)}}(\lambda)&=(\lambda-1)(\lambda+1)(\lambda^5+(7-n)\lambda^4-(6n-22)\lambda^3+(8st+4n+8t-30)\lambda^2\\&+(32st+22n-103)\lambda+13n-72st-40t-57.
\end{align*}
Adding $\alpha $J to the blocks of $A((K_n,H_2(s,t)^-))$, where $\alpha $ is constant. Then $A((K_n,H_2(s,t)^-))$ will be 
\begin{center}
$\begin{gathered}A_4=\begin{bmatrix}0&0&0&0&0&\bf{0}^T&\bf{0}^T\\0&0&0&0&0&\bf{0}^T&\bf{0}^T\\0&0&0&0&0&\bf{0}^T&\bf{0}^T\\0&0&0&0&0&\bf{0}^T&\bf{0}^T\\0&0&0&0&0&\bf{0}^T&\bf{0}^T\\\bf{0}&\bf{0}&\bf{0}&\bf{0}&\bf{0}&-I_s&\bf{0}\\\bf{0}&\bf{0}&\bf{0}&\bf{0}&\bf{0}&\bf{0}&-I_t\end{bmatrix}\end{gathered}$.
\end{center}
Since $\lambda_1(Q_4(s,t))>0$ and Spec$(A_4)$$=\begin{Bmatrix}-1^{[n-5]},0^{[5]}\end{Bmatrix}$, $\lambda_1(A((K_n,H_2(s,t)^-)))=\lambda_1(Q_4(s,t))$.\\Note that 
\begin{center}
$P_{Q_{4(s,t)}}(\lambda)-P_{Q_{4(s+1,t-1)}}(\lambda)=8((s-t+2)\lambda^2+(4s-4t+4)\lambda+9t-9s-14)$.
\end{center}
Next, we will discuss in two cases.

\begin{casess}\label{c3}
$s \geq t$.
\end{casess} 

Let $f(\lambda)=8((s-t+2)\lambda^2+(4s-4t+4)\lambda+9t-9s-14)$. Then $f'(\lambda)=16(s-t+2)\lambda+32(s-t+1)>0$ for $\lambda>0$, and $f(\lambda)$ is a monotone increasing function for $\lambda>0$.  We can obtain that $P_{Q_{4(s,t)}}(\frac{n}{2}-1)-P_{Q_{4(s+1,t-1)}}(\frac{n}{2}-1)>0$. And
\begin{align*}
P_{Q_{4(s,t)}}(\frac n2-1)=(\frac n2)(\frac{n-4}2)(-\frac1{32}n^5-\frac18n^4+\frac92n^3+(2st+2t-16)n^2+(8st-8t)n-96st-32t).
\end{align*}
Let $h(n)=(-\frac1{32}n^5-\frac18n^4+\frac92n^3+(2st+2t-16)n^2+(8st-8t)n-96st-32t)$.
Next, we will show $P_{Q_{4(s,t)}}(\frac n2-1)<0$. Thus, we just need to prove $h(n)<0$. Note that
\begin{align*}
&h'(n)=-\frac5{32}n^4-\frac12n^3+\frac{27}2n^2+2(2st+2t-16)n+8st-8t,\\
&h^{\prime\prime}(n)=-\frac58n^3-\frac32n^2+27n+2(2st+2t-16),\\
&h'''(n)=-\frac{15}{8}n^2-3n+27,\\
&h^4\big(n\big)=-\frac{15}{4}n-3<0.
\end{align*}
Recall that $s,t \geq 1$ such that $s+t=n-5$, then 
$$n-6\leq st\leq\begin{cases}(\frac{n-5}{2})^2&\text{if}~ n~is~odd,\\(\frac{n-4}{2})(\frac{n-6}{2})&\text{if}~ n~is~even.\end{cases}$$
Then $h'''(n)$ is a monotone decreasing for $n \geq 9$ and $h'''(n)\le h'''(9)=-\frac{1215}{8}<0$, and then $h''(n)$ is a monotone decreasing function for $n \geq 9$. One can have

$$h''(n)\leq\begin{cases}-\frac58n^3-\frac12n^2+19n-17&\text{if}~ n~is~odd,\\-\frac58n^3-\frac12n^2+19n-20&\text{if}~ n~is~even.\end{cases}$$
Set $h_1(n)=-\frac58n^3-\frac12n^2+19n-17$. Its derivative is $h_1^{\prime}(n)=-\frac{15}{8}n^2-n+19$ and $h_1^{\prime\prime}(n)=-\frac{15}4n-1$. Obviously, $h_1^{\prime\prime}(n)<0$ for $n\geq9$ and  $h_1^{\prime}(n)\leq h_1^{\prime}(9)=-\frac{1135}8<0$. Thus, $h_1(n)$ is a monotone decreasing for $n \geq 9$ and $h_1(n)\leq h_1(9)=-\frac{2737}8<0$. Note that $-\frac58n^3-\frac12n^2+19n-20<h_1(n)<0$. So, $h'(n)$ is monotone decreasing for $n \geq 9$ and 

$$h'(n)\leq\begin{cases}-\frac5{32}n^4+\frac12n^3+\frac{15}2n^2-41n+70&\text{if}~ n~is~odd,\\-\frac5{32}n^4+\frac12n^3+\frac{15}2n^2-44n+72&\text{if}~ n~is~even.\end{cases}$$
Set $h_2(n)=-\frac5{32}n^4+\frac12n^3+\frac{15}2n^2-41n+70$. Clearly, $h_2'(n)=-\frac{5}{8}n^3+\frac{3}{2}n^2+15n-41$, $h_2''(n)=-\frac{15}{8}n^2+3n+15$ and $h_2^{\prime\prime\prime}(n)=-\frac{15}4n+3$. Obviously, $h_2^{\prime\prime\prime}(n)<0$ for $n\geq9$. Thus, $h_2^{\prime\prime}(n)\leq h_2^{\prime\prime}(9)=-\frac{879}8<0$ and  $h_2^{\prime}(n)\leq h_2^{\prime}(9)=-\frac{1921}{8}<0$. Hence, $h_2(n)$ is a monotone decreasing for $n \geq 9$ and $h_2(n)\leq h_2(9)=-\frac{11269}{32}<0$. Note that $-\frac5{32}n^4+\frac12n^3+\frac{15}2n^2-44n+7<h_2(n)<0$. This implies that $h(n)$ is monotone decreasing for $n \geq 9$ and

$$h(n)\leq\begin{cases}-\frac1{32}n^5+\frac38n^4+\frac52n^3-\frac{113}2n^2+294n-520&\text{if}~ n~is~odd,\\-\frac1{32}n^5+\frac38n^4+\frac52n^3-58n^2+296n-480&\text{if}~ n~is~even.\end{cases}$$
Set $h_3(n)=-\frac1{32}n^5+\frac38n^4+\frac52n^3-\frac{113}2n^2+294n-520$. Similarly, we obtain that $h_3(n)<0$. Note that $-\frac1{32}n^5+\frac38n^4+\frac52n^3-58n^2+296n-480<h_3(n)<0$. This means that $\lambda_1(A((K_n,H_2(s,t)^-)))>\frac n2-1$. Let $\lambda_1=\lambda_1(A((K_n,H_2(s,t)^-)))$, thus
\begin{center}
$-P_{Q_{4(s+1,t-1)}}(\lambda_1)=P_{Q_{4(s,t)}}(\lambda_1)-P_{Q_{4(s+1,t-1)}}(\lambda_1)>0$.
\end{center}
Then $\lambda_1(A((K_n,H_2(s,t)^-)))<\lambda_1(A((K_n,H_2(s+1,t-1)^-)))$. By repeatedly using this operation, we can get that
\begin{center}
 $\lambda_1(A((K_n,H_2(s,t)^-))) \leq \lambda_1(A((K_n,H_2(n-6,1)^-)))$, 
\end{center}
the equality holds if and only if $s=n-6$, $t=1$.
Note that
\begin{align*}	
P_{Q_4(n-6,1)}(\lambda)&=(\lambda-1)(\lambda+1)(\lambda^5+(7-n)\lambda^4-(6n-22)\lambda^3+(12n-70)\lambda^2+(54n-295)\lambda\\&+335-59n).
\end{align*}
 Clearly, $H_2(n-5,0)\cong H_1(0,n-4)$. Then $\varphi((K_n,H_2(n-5,0)^-),\lambda)=(\lambda+1)^{n-4}(\lambda-1)(\lambda^3+(5-n)\lambda^2+(11-4n)\lambda+13n-57)$, and set $f_1(\lambda)=(\lambda+1)^{3}(\lambda-1)(\lambda^3+(5-n)\lambda^2+(11-4n)\lambda+13n-57)$ by Lemma \ref{a}. Hence
\begin{center}
$P_{Q_4(n-6,1)}(\lambda)-f_1(\lambda)=8(\lambda+1)(\lambda-1)((n-5)\lambda^2+4(n-6)\lambda+49-9n)$.
\end{center}
Let $f(\lambda)=(n-5)\lambda^2+4(n-6)\lambda+49-9n$. It is evidently that $f'(\lambda)=2(n-5)\lambda+4(n-6)>0$ for $n \geq 9$ and $\lambda>0$. Thus, $f(\lambda)$ is a monotone increasing function for $n \geq 9$ and $\lambda>0$, and $f(\frac n2-1)>0$ for $n \geq 9$. 
Notice that
\begin{align*}	
 P_{Q_4(n-6,1)}(\frac{n}{2}-1)=(\frac n2)(\frac{n-4}2)(-\frac{1}{32}n^5-\frac{1}{8}n^4+\frac{13}{2}n^3-18n^2-152n+544).
\end{align*}
Let $w(n)=-\frac{1}{32}n^5-\frac{1}{8}n^4+\frac{13}{2}n^3-18n^2-152n+544$. Similarly, we obtain that $w(n)<0$ for $n \geq 9$. Then $P_{Q_4(n-6,1)}(\frac{n}{2}-1)<0$ and  $\lambda_1(A((K_n,H_2(n-6,1)^-)))>\frac n2-1$. Let $\lambda_1=\lambda_1(A((K_n,H_2(n-6,1)^-)))$, then $f_1(\lambda_1)<0$. thus
\begin{center}
 $\varphi((K_n,H_2(n-5,0)^-),\lambda_1)<0$. 
\end{center}
This indicates that $\lambda_1(A((K_n,H_2(n-6,1)^-)))<\lambda_1(A((K_n,H_2(n-5,0)^-)))$. Since $H_2(n-5,0)\cong H_1(0,n-4)$,  $\lambda_1(A((K_n,H_2(s,t)^-)))<\lambda_1(A((K_n,D_{1,n-3}^-)))$ by Lemma \ref{a}.

\begin{casess}
 $s<t$. 
\end{casess}

\noindent{\bf{${\mbox{subcase 2.1. }}$}}
  $t-s=1$, i.e., $H_2(s,t)\cong H_2(\frac{n-5}{2},\frac{n-3}{2})$ and $P_{Q_{4(s,t)}}(\lambda)-P_{Q_{4(s+1,t-1)}}(\lambda)=8(\lambda^2-5)>0$ for $\lambda>\sqrt{5}$. Let $\lambda_1=\lambda_1(A((K_n,H_2(s,t)^-)))$. Note that $P_{Q_4(s,t)}(\frac{n}{2}-1)<0$ by Case \ref{c3}, then $\lambda_1>\frac{n}{2}-1>\sqrt{5}$ for $n \geq 9$. Hence, $P_{Q_{4(s+1,t-1)}}(\lambda_1)<0$ and $\lambda_1(A((K_n,H_2(\frac{n-5}{2},\frac{n-3}{2})^-)))<\lambda_1(A((K_n,H_2(\frac{n-3}{2},$ $\frac{n-5}{2})^-)))$. Since $\lambda_1(A((K_n,H_2(\frac{n-3}{2},\frac{n-5}{2})^-)))<\lambda_1(A((K_n,D_{1,n-3}^-)))$ by Case \ref{c3}, $\lambda_1(A((K_n,H_2(\frac{n-5}{2},$ $ \frac{n-3}{2})^-)))$ $<\lambda_1(A((K_n,D_{1,n-3}^-)))$.

\noindent{\bf{${\mbox{subcase 2.2. }}$}}
$t-s=2$, i.e., $H_2(s,t)\cong H_2(\frac{n-7}{2},\frac{n-3}{2})$. Set $h(\lambda)=P_{Q_{4(s,t)}}(\lambda)-P_{Q_{4(s-1,t+1)}}(\lambda)=8(2\lambda^2+12\lambda-22)$, and the maximal solution of $h(\lambda)=0$ is $-3+2\sqrt{5}$. Let $\lambda_1=\lambda_1(A((K_n,H_2(s,t$ $)^-)))$. Note that $P_{Q_4(s,t)}(\frac{n}{2}-1)<0$ by Case \ref{c3}, then $\lambda_1>\frac{n}{2}-1>-3+2\sqrt{5}$ for $n \geq 9$. This means that $P_{Q_{4(s-1,t+1)}}(\lambda_1)$ $<0$ and $\lambda_1(A((K_n,H_2(s,t)^-)))<\lambda_1(A((K_n,H_2(s-1,t+1)^-)))$. By repeatedly using this operation, we can get that
\begin{center}
	$\lambda_1(A((K_n,H_2(s,t)^-))) \leq \lambda_1(A((K_n,H_2(1, n-6)^-)))$,
\end{center}
the equality holds if and only if $s=1$, $t=n-6$.

\noindent{\bf{${\mbox{subcase 2.3. }}$}}
$t-s \geq 3$. Note that $P_{Q_{4(s,t)}}(\lambda)-P_{Q_{4(s-1,t+1)}}(\lambda)=8((t-s)\lambda^2+4(t-s+1)\lambda+9s-9t-4)$. Set $f(\lambda)=8((t-s)\lambda^2+4(t-s+1)\lambda+9s-9t-4)$, and then $f'(\lambda)=16(t-s)\lambda+32(t-s+1)>0$ for $\lambda>0$. Hence, $f(\lambda)$ is a monotone increasing function for $\lambda>0$ and $f(2)=8(3t-3s+4)>0$. Let $\lambda_1=\lambda_1(A((K_n,H_2(s,t)^-)))$. Note that  $P_{Q_4(s,t)}(\frac{n}{2}-1)<0$ by Case \ref{c3}, then $\lambda_1>\frac{n}{2}-1>2$ for $n \geq 9$. This indicates that $P_{Q_{4(s-1,t+1)}}(\lambda_1)<0$ and $\lambda_1(A((K_n,H_2(s,t)^-)))<\lambda_1(A((K_n,H_2(s-1,t+1)^-)))$. By repeatedly using this operation, one can have
\begin{center}
	$\lambda_1(A((K_n,H_2(s,t)^-))) \leq \lambda_1(A((K_n,H_2(1, n-6)^-)))$,
\end{center}
the equality holds if and only if $s=1$, $t=n-6$.

Next, we will give that $\lambda_1(A((K_n,H_2(1,n-6)^-)))<\lambda_1(A((K_n,H_2(0,n-5)^-)))$. Note that
\begin{align*}
P_{Q_4(1,n-6)}(\lambda)&=(\lambda-1)(\lambda+1)(\lambda^5+(7-n)\lambda^4-(6n-22)\lambda^3+(20n-126)\lambda^2\\&+(54n-295)\lambda+615-99n).
\end{align*}
By appropriatly marking the vertices of $(K_n,H_2(0,n-5)^-)$, we can get that
\begin{center}
$\begin{gathered}A((K_n,H_2(0,n-5)^-))=\begin{bmatrix}0&-1&-1&-1&1&j_{n-5}^T\\-1&0&-1&1&1&j_{n-5}^T\\-1&-1&0&1&1&j_{n-5}^T\\-1&1&1&0&-1&j_{n-5}^T\\1&1&1&-1&0&-j_{n-5}^T\\j_{n-5}&j_{n-5}&j_{n-5}&j_{n-5}&-j_{n-5}&\left(J-I\right)_{n-5}\end{bmatrix}\end{gathered}$.
\end{center}
Then 
\begin{center}
$\begin{gathered}\lambda I_n-A((K_n,H_2(0,n-5)^-))=\begin{bmatrix}\lambda&1&1&1&-1&-j_{n-5}^T\\1&\lambda&1&-1&-1&-j_{n-5}^T\\1&1&\lambda&-1&-1&-j_{n-5}^T\\1&-1&-1&\lambda&1&-j_{n-5}^T\\-1&-1&-1&1&\lambda&j_{n-5}^T\\-j_{n-5}&-j_{n-5}&-j_{n-5}&-j_{n-5}&j_{n-5}&\left(\left(\lambda+1\right)I-J\right)_{n-5}\end{bmatrix}\end{gathered}$.
\end{center}
Now, we apply finitely many elementary row and column operations on the matrix $\lambda I_n-A((K_n,H_2$ $(0,n-5)^-))$. First, subtracting the $6$-th row from all the lower rows and adding the $i$-th column to the $6$-th column, for $i=n,...,7$. This leads to the following matrix:
\begin{center}
$\lambda I_n-A((K_n,H_2(0,n-5)^-))=\begin{bmatrix}\lambda&1&1&1&-1&5-n&*\\1&\lambda&1&-1&-1&5-n&*\\1&1&\lambda&-1&-1&5-n&*\\1&-1&-1&\lambda&1&5-n&*\\-1&-1&-1&1&\lambda&n-5&*\\-1&-1&-1&-1&1&\lambda+6-n&*\\\bf{0}&\bf{0}&\bf{0}&\bf{0}&\bf{0}&\bf{0}&(\lambda+1)I_{n-6}\end{bmatrix}$.
\end{center}
Thus, the characteristic polynomial of $A((K_n,H_2(0,n-5)^-))$ is
\begin{align*}
\varphi((K_n,H_2(0,n-5)^-),\lambda)&=(\lambda+1)^{n-6}(\lambda-1)^2(\lambda^4+(8-n)\lambda^3+(30-7n)\lambda^2+(5n-40)\lambda\\&+27n-143).
\end{align*}
Let $f_2(\lambda)=(\lambda+1)(\lambda-1)^2(\lambda^4+(8-n)\lambda^3+(30-7n)\lambda^2+(5n-40)\lambda+27n-143)$. Notice that
\begin{center}
$P_{Q_4(1,n-6)}(\lambda)-f_2(\lambda)=8(\lambda-1)(\lambda+1)((n-7)\lambda^2+4(n-6)\lambda+59-9n)$.
\end{center}
Let $g(\lambda)=(n-7)\lambda^2+4(n-6)\lambda+59-9n$. Then $g'(\lambda)=2(n-7)\lambda+4(n-6)>0$ for $n \geq 9$, $\lambda>0$ and $g(\lambda)$ is a monotone increasing function for  $n \geq 9$ and $\lambda>0$. Note that $g(\frac{n}{2}-1)=\frac{1}{4}n^3-\frac{3}{4}n^2-17n+76$, thus $g'(\frac{n}{2}-1)=\frac{3}{4}n^2-\frac{3}{2}n-17$ and $g^{\prime\prime}(\frac n2-1)=\frac32n-\frac32>0$ for $n \geq 9$. Hence, $g'(\frac{n}{2}-1)\geq\frac{121}{4}>0$, and $g(\frac n2-1)\geq\frac{89}2>0$. Notice that
\begin{center}
$P_{Q_4(1,n-6)}(\frac{n}{2}-1)=(\frac{n}{2})(\frac{n-4}{2})(-\frac{1}{32}n^5-\frac{1}{8}n^4+\frac{17}{2}n^3-40n^2-128n+768)$.
\end{center}
Let $z(n)=-\frac{1}{32}n^5-\frac{1}{8}n^4+\frac{17}{2}n^3-40n^2-128n+768$. It is easy to see that $z'(n)=-\frac5{32}n^4-\frac12n^3+\frac{51}2n^2-80n-128$, 
$z^{\prime\prime}(n)=-\frac58n^3-\frac32n^2+51n-80$, $z^{\prime\prime\prime}(n)=-\frac{15}8n^2-3n+51$ and $z^{(4)}(n)=-\frac{15}{4}n-3<0$ for $n \geq 9$. Hence, $z^{\prime\prime\prime}(n)\leq z^{\prime\prime\prime}(9)=-\frac{1023}8<0$ and $z''(n)\leq z''(9)=-\frac{1585}{8}<0$. So, $z'(n)\leq z'(9)=-\frac{5509}{32}<0$ and $z(n)$ is a monotone decreasing for $n \geq 9$, and then $z(n)\leq z(9)=-\frac{2973}{32}<0$. Since $P_{Q_4(1,n-6)}(\frac{n}{2}-1)<0$, $\lambda_1(A((K_n,H_2(1,n-6)^-)))>\frac n2-1$. Let $\lambda_1=\lambda_1(A((K_n,H_2(1,n-6)^-)))$. Hence, $f_2(\lambda_1)<0$ and $\lambda_1(A((K_n,H_2(1,n-6)^-)))<\lambda_1(A((K_n,H_2(0,n-5)^-)))$.
 
Finally, we assert that $\lambda_1(A((K_n,H_2(0,n-5)^-)))<\lambda_1(A((K_n,H_2(n-5,0)^-)))$. let $f_3(\lambda)=(\lambda+1)^2(\lambda-1)(\lambda^3+(5-n)\lambda^2+(11-4n)\lambda+13n-57)$ and $f_4(\lambda)=(\lambda-1)^2(\lambda^4+(8-n)\lambda^3+(30-7n)\lambda^2+(5n-40)\lambda+27n-143)$. Set
 \begin{center}
 $h(\lambda)=f_4(\lambda)-f_3(\lambda)=8(n-5)(\lambda-1)(\lambda^2-5)$,
 \end{center}
and the maximal solution of $h(\lambda)=0$ is $\sqrt{5}$. And
\begin{center}
$f_4(\frac{n}{2}-1)=(\frac{n-4}{2})^{2}(-\frac{1}{16}n^{4}-\frac{1}{2}n^{3}+11n^{2}-24n-80)$.
\end{center}
Let $y(n)=-\frac{1}{16}n^4-\frac{1}{2}n^3+11n^2-24n-80$. Similarly, we obtain that  $y(n)<0$ for $n \geq 9$. Then $f_4(\frac{n}{2}-1)<0$ and $\lambda_1(A((K_n,H_2(0,n-5)^-)))>\frac n2-1>\sqrt{5}$ for $n \geq 9$. Let  $\lambda_1=\lambda_1(A((K_n,H_2(0,n-5)^-)))$, then $f_3(\lambda_1)<0$. This induces that  $\lambda_1(A((K_n,H_2(0,n-5)^-)))<\lambda_1(A((K_n,H_2(n-5,0)^-)))$. 

Obviously, $\lambda_1(A((K_n,H_2(n-5,0)^-)))=\lambda_1(A((K_n,H_1(0,n-4)^-)))<\lambda_1(A((K_n,D_{1,n-3}^-)))$ by Lemma \ref{a}. Hence, the proof is completed.
\end{proof}
\section{Proof of Theorem \ref{thm1}}\label{se3}
\begin{lemma}\label{t}\cite{12}.
	Let $T$ be a spanning tree of $K_n$ and $n \geq 6$. If  $\Gamma=\begin{pmatrix}K_n,T^-\end{pmatrix}$ is an unbalanced signed complete graph with maximum index, then $T \cong D_{1,n-3}$.
\end{lemma}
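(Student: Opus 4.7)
The plan is to show that among all spanning trees $T$ of $K_n$ for which $(K_n, T^-)$ is unbalanced, the index is uniquely maximized by $T \cong D_{1,n-3}$. First note that $(K_n, T^-)$ is balanced if and only if the tree $T$ is a complete bipartite subgraph of $K_n$, which for trees forces $T = K_{1,n-1}$; thus the hypothesis forces $T \ncong K_{1,n-1}$. The strategy has three parts: a Rayleigh-quotient setup, a local edge-shifting lemma, and an explicit comparison among double-broom trees.

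\textbf{Rayleigh setup.} From the decomposition $A((K_n, T^-)) = (J - I) - 2 A(T)$, for any unit vector $x$,
\begin{equation*}
x^\top A((K_n, T^-))\, x = (\mathbf{1}^\top x)^2 - 1 - 2\, x^\top A(T) x .
\end{equation*}
Let $x$ be a unit eigenvector for $\lambda_1 := \lambda_1(A((K_n, T^-)))$ and $s := \mathbf{1}^\top x$; the eigen-equation at each vertex $v$ reads
\begin{equation*}
(\lambda_1 + 1) x_v = s - 2 \sum_{u \sim_T v} x_u .
\end{equation*}
Testing the Rayleigh quotient at $\mathbf{1}/\sqrt{n}$ gives $\lambda_1 > n - 5$, so $\lambda_1$ is close to $n - 1$; together with the eigen-equation this forces the entries of $x$ at pendant $T$-vertices to lie in a small window around $s/(\lambda_1+1)$ (in particular of one sign, which we may assume positive by replacing $x$ by $-x$ if necessary), while entries at high-$T$-degree vertices are of smaller magnitude and may carry the opposite sign.

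\textbf{Edge shifting.} The main technical lemma I would establish is: if $u, w$ are vertices of $T$ with $x_w > x_u$, and $p$ is a $T$-neighbour of $w$ off the $u$--$w$ path with $x_p > 0$, then replacing the edge $wp$ by $up$ produces a spanning tree $T'$ with
\begin{equation*}
\lambda_1(A((K_n, T'^-))) \geq x^\top A((K_n, T'^-))\, x = \lambda_1 + 4 x_p (x_w - x_u) > \lambda_1 ,
\end{equation*}
contradicting extremality. Hence for every tree-edge $wp$ and every vertex $u$ off the $u$--$w$ path one has $x_u \geq x_w$ or $x_p \leq 0$. Combined with the sign pattern established above, a case analysis then shows that all tree edges are incident to a common vertex together with at most one further edge, so $T$ is a double broom $D_{p,q}$ consisting of two adjacent centres with $p$ and $q$ attached pendants, $p + q = n - 2$, $p \geq q \geq 1$.

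\textbf{Comparison among double brooms and main obstacle.} For $D_{p,q}$ the partition into the two centres and the two pendant classes is equitable, giving an explicit $4 \times 4$ quotient matrix of $A((K_n, D_{p,q}^-))$. Writing its characteristic polynomial as $P_{p,q}(\lambda)$, a direct computation shows that $P_{p,q}(\lambda) - P_{p+1, q-1}(\lambda)$ is linear in $\lambda$ with positive coefficients on the range $\lambda > n/2 - 1$ (guaranteed by the lower bound from the setup stage), so $\lambda_1(A((K_n, D_{p,q}^-)))$ is strictly increasing as $|p - q|$ grows, and the family maximum is attained uniquely at $(p, q) = (n-3, 1)$, i.e., $T \cong D_{1, n-3}$. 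The principal obstacle lies in the edge-shifting step: since $A((K_n, T^-))$ is not entrywise non-negative, Perron--Frobenius does not directly apply, and the top eigenvector $x$ can carry both signs. Producing a rigorous sign and ordering pattern for $x$ -- in particular justifying $x_p > 0$ at the pendant being moved and locating the correct pair $x_w > x_u$ -- is the technical heart of the argument; once it is in hand, the polynomial comparison among double brooms is routine.
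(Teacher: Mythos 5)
The paper offers no proof of this statement at all: it is imported verbatim from reference \cite{12} (Li, Lin and Meng), so there is no in-house argument to compare against and your sketch must be judged on its own terms. Your setup is sound. The decomposition $A((K_n,T^-))=(J-I)-2A(T)$, the eigen-equation $(\lambda_1+1)x_v=s-2\sum_{u\sim_T v}x_u$, the Rayleigh bound $\lambda_1>n-5$, and the edge-shift identity $x^\top(A(\Gamma')-A(\Gamma))x=4x_p(x_w-x_u)$ are all correct; the last is essentially relocation (i) of Lemma \ref{o} (Koledin--Stani\'{c}), which is exactly the tool this paper uses for its own structural lemmas. The concluding comparison among double brooms via an equitable $4\times4$ quotient matrix and a telescoping difference of characteristic polynomials mirrors the style of Lemmas \ref{a} and \ref{y} and would indeed be routine once the reduction to double brooms is in place.

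The gaps, however, are genuine and sit precisely where you locate the ``technical heart.'' First, the sign and ordering pattern of the top eigenvector is asserted rather than established: from the eigen-equation a pendant entry satisfies $|x_v-s/(\lambda_1+1)|\le 2/(\lambda_1+1)$, and to conclude positivity you need $|s|/(\lambda_1+1)$ to exceed this window; one can extract $s^2=\lambda_1+1+2x^\top A(T)x\ge n-4-2\sqrt{n-1}$, but this only wins for moderately large $n$, whereas the lemma is claimed for all $n\ge6$. Second, the step ``a case analysis then shows that $T$ is a double broom'' is the entire content of the theorem and is not supplied; the shifting lemma yields one inequality per (edge, target-vertex) pair, and it is far from clear how these alone exclude caterpillars, spiders, or long paths. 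Third, every application of the shift must check that the new tree is not the star $K_{1,n-1}$, the unique spanning tree making $(K_n,T^-)$ balanced: the star has index $n-1$, strictly larger than every unbalanced competitor, so a shift landing there produces no contradiction with maximality \emph{among unbalanced graphs}. Your double-broom comparison respects this by keeping $q\ge1$, but the reduction step does not address it. As a strategy the proposal is reasonable and consistent with how \cite{12} and this paper argue elsewhere, but as written it is a plan with its hardest steps left open, not a proof.
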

\begin{lemma}\label{o}\cite{11}.
	Let $r,s,t$ and $u$ be distinct vertices of a signed graph $\Gamma$ and  $X=\begin{pmatrix}x_1,x_2,...,x_n\end{pmatrix}^T$ be an eigenvector corresponding to  $\lambda_1(A(\Gamma))$. Then

	(i) Let $\Gamma^{\prime}$ be obtained from $\Gamma$ by reversing the sign of the positive edge $rs$ and the negative edge $rt$. If
	$$\begin{cases}x_r\geq0,x_s\leq x_t&or\\x_r\leq0,x_s\geq x_t,&\end{cases}$$
	then $\lambda_1(A(\Gamma))\leq\lambda_1(A(\Gamma^\prime))$. If at least one inequality for the entries of $X$ is strict, then $\lambda_1(A(\Gamma))<\lambda_1(A(\Gamma^\prime))$.
	
	(ii) Let $\Gamma^{\prime}$ be obtained from $\Gamma$ by reversing the sign of the positive edge $rs$ and the nagative edge $tu$. If $x_rx_s\leq x_tx_u$, then  $\lambda_1(A(\Gamma))\leq\lambda_1(A(\Gamma^\prime))$. If at least one of the entries $x_r,x_s,x_t,x_u$ is distinct from zero, then $\lambda_1(A(\Gamma))<\lambda_1(A(\Gamma^\prime))$.
\end{lemma}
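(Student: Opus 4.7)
The plan is to base the proof on the Rayleigh quotient characterization of the largest eigenvalue together with an explicit computation of the difference $A(\Gamma')-A(\Gamma)$. Normalize the given eigenvector so that $X^TX=1$; then $\lambda_1(A(\Gamma))=X^TA(\Gamma)X$, while for any unit vector $Y$ one has $\lambda_1(A(\Gamma'))\geq Y^TA(\Gamma')Y$. Taking $Y=X$ gives
\[
\lambda_1(A(\Gamma'))-\lambda_1(A(\Gamma))\;\geq\;X^T\bigl(A(\Gamma')-A(\Gamma)\bigr)X,
\]
so it will suffice to show that the right-hand side is nonnegative under each hypothesis, and strictly positive (or else ruled out by an eigenvector argument) in the strict version.

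For part (i), switching the signs of the positive edge $rs$ and the negative edge $rt$ flips exactly the symmetric pairs $(r,s),(s,r)$ from $+1$ to $-1$ and $(r,t),(t,r)$ from $-1$ to $+1$; every other entry of $A$ is unchanged. I will expand $X^T(A(\Gamma')-A(\Gamma))X$ directly to obtain
\[
X^T\bigl(A(\Gamma')-A(\Gamma)\bigr)X \;=\; -4x_rx_s+4x_rx_t \;=\; 4x_r(x_t-x_s),
\]
and then note that the two sign configurations listed in the lemma, namely $x_r\geq 0$ with $x_s\leq x_t$ and $x_r\leq 0$ with $x_s\geq x_t$, are precisely the cases making this quantity nonnegative. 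For part (ii), the analogous computation with the disjoint pairs $\{r,s\}$ and $\{t,u\}$ yields
\[
X^T\bigl(A(\Gamma')-A(\Gamma)\bigr)X \;=\; -4x_rx_s+4x_tx_u \;=\; 4(x_tx_u-x_rx_s),
\]
which is nonnegative exactly under the hypothesis $x_rx_s\leq x_tx_u$.

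The main obstacle — and really the only one beyond bookkeeping — will be the strict version. Whenever the Rayleigh increment computed above is itself strictly positive, strictness follows immediately. The delicate case is when the increment vanishes yet the hypothesis asserts that at least one inequality (respectively at least one entry) is strict, for instance $x_r>0$ together with $x_s=x_t$ in part (i). My plan there is to argue by contradiction: suppose $\lambda_1(A(\Gamma'))=\lambda_1(A(\Gamma))$; then $X$ attains the Rayleigh maximum of $A(\Gamma')$ as well and must therefore be an eigenvector of $A(\Gamma')$ for $\lambda_1(A(\Gamma'))$. Subtracting the two eigenvalue equations yields $(A(\Gamma')-A(\Gamma))X=\mathbf{0}$; reading off the rows indexed by $r,s,t$ in part (i) forces both $x_r=0$ and $x_s=x_t$, while in part (ii) the rows indexed by $r,s,t,u$ force $x_r=x_s=x_t=x_u=0$. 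Each of these conclusions contradicts the corresponding strictness hypothesis, giving the strict inequality in both parts and completing the proof.
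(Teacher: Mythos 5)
Your proof is correct. The paper itself offers no proof of this lemma---it is quoted verbatim from Koledin and Stani\'{c} \cite{11}---but your argument is the standard one (and the one used in that reference): the Rayleigh-quotient bound $\lambda_1(A(\Gamma'))-\lambda_1(A(\Gamma))\geq X^T(A(\Gamma')-A(\Gamma))X$ with the explicit increments $4x_r(x_t-x_s)$ and $4(x_tx_u-x_rx_s)$, followed, in the degenerate equality case, by the observation that $X$ would then be a $\lambda_1$-eigenvector of $A(\Gamma')$ so that $(A(\Gamma')-A(\Gamma))X=\mathbf{0}$, whose rows force exactly the equalities excluded by the strictness hypothesis. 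This is also precisely the technique the authors redeploy inside the proof of Lemma \ref{77}, so your reconstruction is faithful to how the result is actually established.
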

Let $R(r,s,t)$ stand for relocation $(i)$ in the Lemma \ref{o}. Kafai, Heydari, Rad and Maghasedi \cite{999}  showed that among all signed complete graphs
of order $n>5$ whose negative edges induce a unicyclic graph of order
$k$ and maximizes the index, the negative edges induce a triangle with
all remaining vertices being pendant at the same vertex of the triangle.
\begin{lemma}\label{77}
Let $H$ be a connected $K_{2,2}$-minor free spanning subgraph of $K_n$ for $n\geq 9$. If $\Gamma=(K_n,H^-)$ has the maximum index, then $H \cong U_1$.
\end{lemma}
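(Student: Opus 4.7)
Since $H$ is connected and $K_{2,2}$-minor free, each block of $H$ is either a $K_2$ or a $K_3$, i.e., $H$ is a triangle cactus. Let $c(H)$ denote the number of triangular blocks of $H$, and proceed by cases on $c(H)$. If $c(H)=0$, then $H$ is a spanning tree; combining Lemma \ref{t} and Lemma \ref{z} gives $\lambda_1(A((K_n,H^-)))\le\lambda_1(A((K_n,D_{1,n-3}^-)))<\lambda_1(A((K_n,U_1^-)))$, contradicting the extremality of $H$ since $U_1$ also lies in our class. If $c(H)=1$, then $H$ is unicyclic with the unique cycle being a triangle, and the cited result of Kafai, Heydari, Rad and Maghasedi \cite{999} forces $H \cong U_1$.

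For the main case $c(H)\ge 2$, the plan is to use the relocation Lemma \ref{o} to reduce $H$ to one of the already-handled canonical graphs. Let $X=(x_1,\dots,x_n)^T$ be the Perron eigenvector of $A((K_n,H^-))$. First, using Lemma \ref{o}(i) I would flatten any pendant subtree of $H$ to a single pendant edge attached to a triangle vertex, without decreasing $\lambda_1$. Second, using Lemma \ref{o}(ii) I would show that if two triangles of $H$ do not share a vertex, one can swap a positive edge of $K_n$ for a connecting negative edge to strictly increase $\lambda_1$ while preserving $K_{2,2}$-minor freeness; hence all triangles can be forced to share a common vertex $v$. Third, a similar Lemma \ref{o}(ii) swap reduces the number of triangles from any $k\ge 3$ down to $2$: remove one triangular edge of $H$ and add a new pendant negative edge at $v$, and verify that the inequality $x_rx_s\le x_tx_u$ required by Lemma \ref{o}(ii) holds thanks to the sign pattern of $X$ on triangle vertices versus pendants.

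Once the reduction is complete, the extremal $H$ with $c(H)\ge 2$ is either $U_2$ or some $H_2(s,t)$. Lemma \ref{Lem3} gives $\lambda_1(A((K_n,U_2^-)))<\lambda_1(A((K_n,D_{1,n-3}^-)))$, Lemma \ref{y} gives $\lambda_1(A((K_n,H_2(s,t)^-)))<\lambda_1(A((K_n,D_{1,n-3}^-)))$, and Lemma \ref{z} gives $\lambda_1(A((K_n,D_{1,n-3}^-)))<\lambda_1(A((K_n,U_1^-)))$, so in every subcase $\lambda_1(A((K_n,H^-)))<\lambda_1(A((K_n,U_1^-)))$, contradicting the extremality of $H$ when $H\not\cong U_1$. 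The main obstacle will be the third reduction step above: both verifying the eigenvector inequality required by Lemma \ref{o}(ii) for the swap that eliminates an extra triangle, and confirming after each swap that the new negative-edge-induced graph remains a connected triangle cactus (hence $K_{2,2}$-minor free); a back-up approach would be to carry out a direct quotient-matrix computation as in the proofs of Lemmas \ref{Lem3} and \ref{y} to show that $\lambda_1$ of a $k$-triangle bouquet through a common vertex is decreasing in $k$ for $k\ge 2$.
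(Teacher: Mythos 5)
Your treatment of the cases $c(H)=0$ and $c(H)=1$ coincides with the paper's: the tree case is dispatched by Lemmas \ref{t} and \ref{z}, and the single-triangle case by the cited result of \cite{999}. The genuine gap is in the case $c(H)\ge 2$, where your route diverges from the paper's and is not actually carried out. The paper does not classify or reduce the two-or-more-triangle graphs at all; instead it proves directly that a maximizer cannot contain a second triangle. The mechanism is a sign analysis of the eigenvector $X$ on the three vertices of a second triangle $v_4v_5v_6$: first one shows (as in Claim \ref{cc1}) that no triangle of $H$ can have all three eigenvector entries zero; then either some pair satisfies $x_ix_j>0$, in which case reversing the sign of the negative edge $v_iv_j$ gives $\lambda_1(A(\Gamma'))-\lambda_1(A(\Gamma))\ge 4x_ix_j>0$ while the negative-edge graph stays connected, spanning, $K_{2,2}$-minor free and still contains the first triangle; or one or two entries vanish, in which case reversing a negative edge incident with a zero coordinate gives $\lambda_1(A(\Gamma'))\ge\lambda_1(A(\Gamma))$, and equality is excluded by comparing the two eigenvalue equations at that coordinate (which would force $2x_j=0$ for a nonzero $x_j$). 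This short argument replaces your entire reduction chain and is the idea your proposal is missing.

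Concretely, two steps of your plan would fail as stated. First, your claimed endpoint of the reduction is wrong: $H_2(s,t)$ contains only one triangle (it is the triangle $v_1v_2v_3$ together with the path $v_1v_4v_5$ and pendants at $v_4$ and $v_5$), so the assertion that an extremal $H$ with $c(H)\ge 2$ is ``either $U_2$ or some $H_2(s,t)$'' cannot be the outcome of a reduction that preserves having at least two triangles; moreover, two-triangle configurations with pendants attached at non-shared triangle vertices are covered by neither Lemma \ref{Lem3} nor Lemma \ref{y}, so the final comparison would not close. Second, the merging and triangle-count-reducing relocations are exactly the hard part and are left unverified: Lemma \ref{o}(ii) reverses one positive and one negative edge simultaneously, so creating a new negative edge joining two triangles forces you to delete a negative edge elsewhere, and you must check at the same time that the hypothesis $x_rx_s\le x_tx_u$ holds, that the new negative-edge graph remains connected and acquires no cycle of length at least $4$, and that the resulting signed graph is still unbalanced; none of this is established. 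Your backup plan (monotonicity in $k$ of the index of the $k$-triangle bouquet) presupposes that the maximizer has already been reduced to a bouquet with all pendants at the common vertex, which is again the unproved step. I recommend replacing the whole $c(H)\ge 2$ branch with the eigenvector sign/zero argument sketched above.
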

\begin{proof}
 It is clear that $H$ is either a tree or a connected graph that only contains $3$-length cycles. According to Lemmas \ref{z} and \ref{t}, $H$ is a connected graph which contains $3$-length cycles. Without loss of generality, suppose that $C_3=v_1v_2v_3v_1$ is a $3$-length cycle of $H$. Let $X=(x_1,x_2,...,x_n)^T$ be a unit eigenvector associated with  $\lambda_1(A(\Gamma))$. Note that $-X$ must be a unit eigenvector of $\Gamma $ if $X$ is a unit eigenvector.

\begin{claim}\label{cc1}
There exists an integer $i$ such that $x_i\neq 0$ for $1 \leq i \leq3$.\label{11}
\end{claim}
Otherwise, $x_1=x_2=x_3=0$. Due to $H$ is a connected graph for $n \geq 9$, there is a vertex $v_4\in V(H)\backslash\ V(C_3)$. Without loss of generality, assume that $v_3v_4\in E(H)$. Firstly, let $x_{4}\neq0$. If $v_4$ is not contained in any $3$-length cycle, then the relocation $R(v_4,v_{3},v_1)$ contradicts with the maximality of $\lambda_1(A(\Gamma))$ by Lemma \ref{o}. If there is another $3$-length cycle $C_3^{\prime}$ such that $v_4\in V(C_3^{\prime})$, then we will consider three cases. If $V(C_3)\cap V(C_3^{\prime})=\phi $, then the relocation $R(v_4,v_{3},v_1)$ contradicts with the maximality of $\lambda_1(A(\Gamma))$ by Lemma \ref{o}. If $V(C_3)\cap V(C_3^{\prime})=\{v_3\}$, then we can construct a new unbalanced signed complete graph $\Gamma^{\prime}$ whose negative edge-induced connected subgraph is still $K_{2,2}$-minor free by reversing the sign of the negative edge $v_3v_4$ such that $\lambda_1(\Gamma^{\prime})>\lambda_1(\Gamma)$, a contradiction. If $|V(C_3)\cap V(C_3^{\prime})|=2$, then $H$ contains $K_{2,2}$-minor, a contradiction. Hence,  $x_{4}=0$. By repeatedly conducting similar discussion on the vertices in $V(H)\backslash\{v_1,v_2,v_3,v_4\}$, we have $X=\bf{0}$, a contradiction.

\begin{claim}\label{claim1.1}
$H$ contains only one $3$-length cycle.
\end{claim}
Otherwise, let $V(C^{\prime}_3)=v_4v_5v_6v_4$ be another $3$-length cycle of $H$. Then we divide the Claim \ref{claim1.1} into the following three cases.

\begin{casesss}\label{s1}
There exists two vertices $v_i,v_j \in  V(C_3^{\prime})$ such that $x_ix_j>0$.
\end{casesss}

Without loss of generality, assume that $x_{4}x_{5}>0$. Then we can construct a new unbalanced signed complete graph $\Gamma^{\prime}$ whose negative edge-induced connected subgraph is still $K_{2,2}$-minor free by reversing the sign of the negative edge $v_{4}v_{5}$ such that
\begin{center}
$\lambda_1(A(\Gamma^{\prime}))-\lambda_1(A(\Gamma))\geq X^T(A(\Gamma^{\prime})-A(\Gamma))X=4x_4x_5>0$,
\end{center}
that is, $\lambda_1(A(\Gamma^{\prime}))>\lambda_1(A(\Gamma))$, a contradiction.

\begin{casesss}\label{s2}
There exists only one vertex $v_i \in V(C_3^{\prime})$ such that $x_i=0$.
\end{casesss}

Without loss of generality, assume that $x_4=0$. By Case \ref{s1}, $x_5x_6<0$. Then we can construct a new unbalanced signed complete graph  $\Gamma^{\prime}$  whose negative edge-induced connected subgraph is still $K_{2,2}$-minor free by reversing the sign of the negative edge $v_{4}v_{5}$ such that
\begin{center}
$\lambda_1(A(\Gamma^{\prime}))-\lambda_1(A(\Gamma))\geq X^T(A(\Gamma^{\prime})-A(\Gamma))X=4x_4x_5=0.$
\end{center}
If $\lambda_1(A(\Gamma^{\prime}))=\lambda_1(A(\Gamma))$, then $X$ is also a unit eigenvector  of $A(\Gamma^{\prime})$ corresponding to $\lambda_1(A(\Gamma^{\prime}))$. However,
\begin{center}
	$\lambda_1(A(\Gamma^{\prime}))x_4-\lambda_1(A(\Gamma))x_4=2x_5\neq0$.
\end{center}
a contradiction. 

\begin{casesss}\label{s3}
There exists two vertices $v_i, v_j \in  V(C_3^{\prime})$ such that $x_i=x_j=0$.
\end{casesss}

Without loss of generality, assume that $x_4=x_5=0$. By Claim \ref{cc1}, $x_6 \neq 0$. Then we can construct a new unbalanced signed complete graph $\Gamma^{\prime}$ whose negative edge-induced connected subgraph is still $K_{2,2}$-minor free by reversing the sign of the negative edge $v_{4}v_{6}$ such that
\begin{center}
$\lambda_1(A(\Gamma^{\prime}))-\lambda_1(A(\Gamma))\geq X^T(A(\Gamma^{\prime})-A(\Gamma))X=4x_4x_6=0.$
\end{center}
If $\lambda_1(A(\Gamma^{\prime}))=\lambda_1(A(\Gamma))$, then $X$ is also a unit eigenvector  of $A(\Gamma^{\prime})$ corresponding to $\lambda_1(A(\Gamma^{\prime}))$. However,
\begin{center}
	$\lambda_1(A(\Gamma^{\prime}))x_4-\lambda_1(A(\Gamma))x_4=2x_6\neq0$.
\end{center}
a contradiction. 

Claim \ref{claim1.1} means that $H$ contains only one $3$-length cycle $C_3=v_1v_2v_3v_1$. Then $H \cong U_1$ by \cite{999}. So, the proof is completed.
\end{proof}
\begin{lemma}\label{l9}
Let $H$ be a connected $K_{2,2}$-minor free spanning subgraph of $K_n$ for $n\geq 9$. If $\Gamma=(K_n,H^-)$ is not switching isomorphic to $(K_n,U_1^-)$ and has the maximum index, then $H\cong D_{1,n-3}$.
\end{lemma}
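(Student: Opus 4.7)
The plan is to argue by a case split on whether $H$ is a tree or contains a cycle. Since $H$ is connected and $K_{2,2}$-minor free, every cycle of $H$ must be a triangle (any cycle of length $\geq 4$ contains $C_4 = K_{2,2}$ as a minor), and any two triangles can share at most a vertex (edge-sharing triangles form $K_4-e$, which already contains $K_{2,2}$ as a subgraph). This sharply restricts the possible shape of $H$ and lines the structures up with the specific patterns studied in Lemmas \ref{Lem3}, \ref{a}, \ref{y}, and \ref{t}.

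For the tree case I would invoke Lemma \ref{t} directly: the unique spanning tree $T$ of $K_n$ maximizing $\lambda_1((K_n, T^-))$ is $T \cong D_{1,n-3}$. Since $D_{1,n-3}$ is itself $K_{2,2}$-minor free and $(K_n, D_{1,n-3}^-)$ is manifestly not switching isomorphic to $(K_n, U_1^-)$, it is an eligible competitor in our extremal problem. Consequently, if $\Gamma$ attains the maximum index and $H$ is a tree, then $H \cong D_{1,n-3}$.

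For the cyclic case I would argue by contradiction: suppose $H$ contains a triangle. Let $X=(x_1,\ldots,x_n)^T$ be a unit eigenvector of $A(\Gamma)$ for $\lambda_1$. Mirroring the two-claim scheme in the proof of Lemma \ref{77}, I would first show that $H$ contains exactly one triangle: given two triangles one can invoke Lemma \ref{o} (on suitable pairs of entries of $X$) to produce a competitor $\Gamma'$ whose negative-edge subgraph remains connected and $K_{2,2}$-minor free but has strictly larger index, contradicting maximality (and $\Gamma'$ need not be $U_1$ since we only remove an edge from one of the two triangles). Once $H$ has exactly one triangle $v_1v_2v_3v_1$, its remaining structure is a disjoint union of pendant trees rooted at $v_1$, $v_2$, $v_3$. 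Because $H \ncong U_1$, either at least two of these rooted trees are nontrivial or at least one has depth $\geq 2$. I would then iterate the two sign-reversal moves of Lemma \ref{o}, at each step verifying that the resulting negative-edge subgraph is still $K_{2,2}$-minor free and not $U_1$, to collapse $H$ into one of the three canonical shapes $U_2$, $H_1(s,t)$, or $H_2(s,t)$ without decreasing $\lambda_1$. Lemmas \ref{Lem3}, \ref{a}, and \ref{y} then each yield the strict inequality $\lambda_1((K_n,H^-)) < \lambda_1((K_n, D_{1,n-3}^-))$, contradicting the maximality of $\Gamma$ (since $D_{1,n-3}$ is a legitimate non-$U_1$ competitor).

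The main obstacle I expect is this last reduction step: proving that \emph{every} triangle-containing, connected, $K_{2,2}$-minor free $H \ncong U_1$ can be driven, by eigenvector-guided sign reversals, into one of $U_2$, $H_1(s,t)$, $H_2(s,t)$, without introducing a $K_{2,2}$ minor along the way. This requires a careful subcase analysis on the signs of the entries of $X$ along the pendant trees hanging from the triangle, invoking parts (i) and (ii) of Lemma \ref{o} in tandem, and verifying at each transformation that the new negative-edge-induced subgraph stays within the allowed class. Once this reduction is in hand, the previously established lemmas finish the argument and close the tree alternative back to $H \cong D_{1,n-3}$.
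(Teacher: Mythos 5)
Your overall strategy coincides with the paper's: split into the tree case (settled immediately by Lemma \ref{t}) and the cycle case, and in the latter use the relocation Lemma \ref{o} to force $H$ into one of the shapes $U_2$, $H_1(s,t)$, $H_2(s,t)$, which Lemmas \ref{Lem3}, \ref{a}, \ref{y} then rule out. The structural observations (every cycle is a triangle, two triangles share at most a vertex) are correct and are implicitly used by the paper as well. However, the proposal has a genuine gap: the entire content of the paper's proof of this lemma \emph{is} the ``last reduction step'' that you defer as an expected obstacle. The paper executes it through an extended case analysis --- first a claim that the unique triangle has only one vertex of degree at least $3$ (proved in five cases according to the signs and equalities among $x_1,x_2,x_3$ and neighboring entries), then a further analysis splitting on $d_H(v_1)>3$ versus $d_H(v_1)=3$ and on the depth of the pendant trees, each branch terminating either in a forced $X=\mathbf{0}$ contradiction or in an identification of $H$ with some $H_1(s,t)$ or $H_2(s,t)$. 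Announcing that such an analysis ``requires a careful subcase analysis'' without carrying it out leaves the proof incomplete, since nothing guarantees a priori that the eigenvector-guided moves always terminate in one of the three canonical shapes while staying inside the class of connected, $K_{2,2}$-minor-free, non-$U_1$ graphs.

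There is also a concrete logical slip in the two-triangle step. You claim one can ``first show that $H$ contains exactly one triangle'' because the competitor produced by a relocation ``need not be $U_1$ since we only remove an edge from one of the two triangles.'' This fails precisely for $H\cong U_2$: breaking either triangle of $U_2$ (two triangles meeting at $v_1$ with all remaining vertices pendant at $v_1$) produces $U_1$, which is excluded as a competitor in this lemma, so no relocation argument can eliminate $U_2$. The correct statement, and what the paper proves, is that $H$ has at most two triangles, and if it has two then $H\cong U_2$, which must then be disposed of by the direct spectral comparison $\lambda_1(A((K_n,D_{1,n-3}^-)))>\lambda_1(A((K_n,U_2^-)))$ of Lemma \ref{Lem3}. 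Your plan does list $U_2$ among the target shapes, so the conclusion is salvageable, but the claim ``exactly one triangle'' as stated is not provable and should be replaced by this two-branch treatment.
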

\begin{proof}
Recall that $H$ is either a tree or a connected  graph that only contains $3$-length cycles. Next, we will assert that $H$ must be a tree. Otherwise, Let $C_3=v_1v_2v_3v_1$ be a $3$-length cycle of $H$ and $X=(x_1,x_2,...,x_n)^T$ be a unit eigenvector associated with  $\lambda_1(A(\Gamma))$. Note that $-X$ must be a unit eigenvector of $\Gamma $ if $X$ is a unit eigenvector. Then we divide the proof into the following two cases. Firstly, we consider that $H$ contains only one $3$-length cycle $C_3=v_1v_2v_3v_1$.
\begin{claimm}\label{claim1.2}
	$C_3$ has only one vertex $v_i$ such that $d_H(v_i) \geq 3$ for $1 \leq i \leq 3$.
\end{claimm}
Otherwise, without loss of generality, assume that $d_H(v_{1})\geq3$ and $d_H(v_{2})\geq3$. Let $v_p\in N_H(v_2)\backslash\{v_1,v_3\}$ and $v_{p^{\prime}}\in N_H(v_1)\backslash\{v_2,v_3\}$. Then we will divide into the following five cases.

\begin{casessss}
	$x_{1}=x_{2}=0$.
\end{casessss}

We first assert that $x_{p^{\prime}}=0$. Otherwise, the relocation $R(v_{p^{\prime}},v_2,v_1)$ contradicts with the maximality of $\lambda_1(A(\Gamma))$ by Lemma \ref{o}. Secondly, we assert that $x_{3}=0$. Otherwise, we can construct a new unbalanced signed graph $\Gamma^{\prime}$ whose negative edge-induced connected subgraph is still $K_{2,2}$-minor free by reversing the sign of the positive edge $v_{3}v_{p^{\prime}}$ and the negative edge $v_{1}v_{2}$ such that  $\lambda_1(A(\Gamma))<\lambda_1(A(\Gamma^\prime))$ by $(ii)$ of Lemma \ref{o}, a contradiction. However, similar to Claim \ref{11} of Lemma \ref{77}, this will lead to  $X=\bf{0}$, a contradiction.

\begin{casessss}
	$x_1=x_2\neq 0$.
\end{casessss}
Without loss of generality, assume that $x_1=x_2>0$. At first, we assert that $x_p=0$. Otherwise, the relocation $R(v_{p},v_1,v_2)$ contradicts with the maximality of $\lambda_1(A(\Gamma))$ by Lemma \ref{o}. Next, we claim that $x_{3}<0$. Otherwise, the relocation $R(v_3,v_p,v_1)$ contradicts with the maximality of $\lambda_1(A(\Gamma))$ by Lemma \ref{o}. Hence, we can construct a new unbalanced signed complete graph $\Gamma^{\prime}$ whose negative edge-induced connected subgraph is still $K_{2,2}$-minor free by reversing the sign of the positive edge $v_{p}v_{3}$ and the negative edge $v_{1}v_{2}$  such that  $\lambda_1(A(\Gamma))<\lambda_1(A(\Gamma^\prime))$ by $(ii)$ of Lemma \ref{o}, a contradiction.

\begin{casessss}
	$x_1\neq x_2$ and $x_1x_2<0$.
\end{casessss}

Without loss of generality, assume that $x_1>0$, $x_2<0$. Firstly, we assert that $x_p>0$. Otherwise, the relocation $R(v_{p},v_1,v_2)$ contradicts with the maximality of $\lambda_1(A(\Gamma))$ by Lemma \ref{o}. If $x_{p^{\prime}}\leq  x_{2}$, then the relocation $R(v_{p},v_{p^{\prime}},v_2)$ contradicts with the maximality of $\lambda_1(A(\Gamma))$ by Lemma \ref{o}. So,  $x_{p^{\prime}}>  x_{2}$. Secondly, we claim that $x_3>0$. Otherwise, the relocation $R(v_{3},v_{p^{\prime}},v_2)$ contradicts with the maximality of $\lambda_1(A(\Gamma))$ by Lemma \ref{o}. Next, we assert that $x_{p^{\prime}}<0$. Otherwise, we can construct a new unbalanced signed complete graph $\Gamma^{\prime}$ whose negative edge-induced connected subgraph is still $K_{2,2}$-minor free by reversing the sign of the positive edge $v_{2}v_{p^{\prime}}$ and the negative edge $v_{1}v_{3}$ such that $\lambda_1(A(\Gamma))<\lambda_1(A(\Gamma^\prime))$ by $(ii)$ of Lemma \ref{o}, a contradiction. Finally, we assert that $x_p<x_1$. Otherwise, the relocation $R(v_{p^{\prime}},v_p,v_1)$ contradicts with the maximality of $\lambda_1(A(\Gamma))$ by Lemma \ref{o}. However, the relocation $R(v_{3},v_{p},v_1)$ contradicts with the maximality of $\lambda_1(A(\Gamma))$ by Lemma \ref{o}.

\begin{casessss}
	$x_1\neq x_2$ and $x_1x_2=0$.
\end{casessss}
Without loss of generality, assume that $x_1>0$, $x_2=0$. At first, we assert that $x_p>0$. Otherwise, the relocation $R(v_{p},v_{1},v_2)$ contradicts with the maximality of $\lambda_1(A(\Gamma))$ by Lemma \ref{o}. Next, we claim that $x_3>0$. Otherwise, the relocation $R(v_{p},v_{3},v_2)$ contradicts with the maximality of $\lambda_1(A(\Gamma))$ by Lemma \ref{o}. However, we can construct a new unbalanced signed complete graph $\Gamma^{\prime}$ whose negative edge-induced connected subgraph is still $K_{2,2}$-minor free by reversing the sign of the positive edge $v_{2}v_{{p}^{\prime}}$ and the negative edge $v_{1}v_{3}$ such that $\lambda_1(A(\Gamma))<\lambda_1(A(\Gamma^\prime))$ by $(ii)$ of Lemma \ref{o},  a contradiction.

\begin{casessss}
	$x_1\neq x_2$ and $x_1x_2>0$.
\end{casessss}
Without loss of generality, assume that $x_1>x_2>0$. Firstly, we claim that $x_p>0$. Otherwise, the relocation $R(v_{p},v_{1},v_2)$ contradicts with the maximality of $\lambda_1(A(\Gamma))$ by Lemma \ref{o}. Secondly, we assert that $x_3> x_{2}$. Otherwise, the relocation $R(v_{p},v_{3},v_2)$ contradicts with the maximality of $\lambda_1(A(\Gamma))$ by Lemma \ref{o}. Finally,  we claim that $x_{p^{\prime}}> x_{2}$. Otherwise, the relocation $R(v_{3},v_{p^{\prime}},v_2)$ contradicts with the maximality of $\lambda_1(A(\Gamma))$ by Lemma \ref{o}. However, the relocation $R(v_{p^{\prime}},v_2,v_1)$ contradicts with the maximality of $\lambda_1(A(\Gamma))$ by Lemma \ref{o}.

By Claim \ref{claim1.2}, without loss of generality, it can be assume that $d_H(v_{1})\geq 3$, $d_H(v_{2})=d_H(v_{3})=2$. Since $H\ncong U_1$, there is a vertex $v_s\in N_H(v_1)\backslash\{v_2,v_3\}$ such that $d_H(v_s)\geq 2$. 
\begin{claimm}\label{claim1.3}
$H \cong D_{1,n-3}$.
\end{claimm}
Without loss of generality, assume that $v_t\in N_H(v_s)\backslash$ $\{v_1\}$. Then we divide the proof into the following two cases.
\begin{case1}
	$x_t=0$.
\end{case1}
Firstly, we assert that $x_1= x_s$. Otherwise, the relocation $R(v_{t},v_{1},v_s)$ contradicts with the maximality of $\lambda_1(A(\Gamma))$ by Lemma \ref{o}. Similarly, we have $x_2=x_3=x_s$. Next, we claim that $x_1=x_2=x_3=x_s=0$. Otherwise, we can construct a new unbalanced signed complete graph $\Gamma^{\prime}$ whose negative edge-induced connected subgraph is still $K_{2,2}$-minor free by reversing the sign of the positive edge $v_{t}v_{1}$ and the negative edge $v_{2}v_{3}$ such that $\lambda_1(A(\Gamma))<\lambda_1(A(\Gamma^\prime))$ by $(ii)$ of Lemma \ref{o}, a contradiction. However, similar to Claim \ref{11} of Lemma \ref{77}, this will lead to $X=\bf{0}$, a contradiction.
\begin{case1}
	$x_t\neq 0$.
\end{case1}
Without loss of generality, assume that $x_t>0$. At first, we assert that $x_1>x_s$. Otherwise, the relocation $R(v_{t},v_{1},v_s)$ contradicts with the maximality of $\lambda_1(A(\Gamma))$ by Lemma \ref{o}. Similarly, we have $x_3>x_s$ and $x_2>x_s$. If $x_3\geq0$, then the relocation $R(v_{3},v_{s},v_2)$ contradicts with the maximality of $\lambda_1(A(\Gamma))$ by Lemma \ref{o}. So, $x_3<0$. Similarly, $x_2<0$. We also claim that $x_1>0$. Otherwise, the relocation $R(v_{1},v_{t},v_2)$ contradicts with the maximality of $\lambda_1(A(\Gamma))$ by Lemma \ref{o}. Finally, we assert that $x_s<0$. Otherwise, the relocation $R(v_s,v_2,v_1)$ contradicts with the maximality of $\lambda_1(A(\Gamma))$ by Lemma \ref{o}. 

After the above preparations, we will further discuss in two subcases.

\begin{subcase1}
	$d_H(v_1)>3$.
\end{subcase1}

Without loss of generality, assume that $v_w\in N_H(v_1)\backslash\{v_2,v_3,v_s\}$. At first, we assert that $x_w<0$. Otherwise, the relocation $R(v_{w},v_{2},$ $v_1)$ contradicts with the maximality of $\lambda_1(A(\Gamma))$ by Lemma \ref{o}. If $x_w\leq x_s$, then the relocation $R(v_{t},v_{w},$ $v_s)$ contradicts with the maximality of $\lambda_1(A(\Gamma))$ by Lemma \ref{o}. So, $x_w> x_s$. We also claim that $x_t<x_1$. Otherwise, the relocation $R(v_{w},v_{t},$ $v_1)$ contradicts with the maximality of $\lambda_1(A(\Gamma))$ by Lemma \ref{o}. Next, we assert that $d_H(v_{t})=1$. Otherwise, Let $v_a\in N_{H}(v_t)\setminus\{v_s\}$. If $x_a\leq0$, then the relocation $R(v_{a},v_{1},$ $v_t)$ contradicts with the maximality of $\lambda_1(A(\Gamma))$ by Lemma \ref{o}.  If $x_a>0$, then the relocation $R(v_{a},v_{s},$ $v_t)$ contradicts with the maximality of $\lambda_1(A(\Gamma))$ by Lemma \ref{o}. Similarly, $d_H(u)=1$ for any $u \in N_{H}(v_s)\setminus\{v_1\}$. Finally, we claim that $d_H(v_{w})=1$. Otherwise, let $v_b\in N_{H}(v_w)\setminus\{v_1\}$. If $x_b\leq0$, then the relocation $R(v_{b},v_{1},$ $v_w)$ contradicts with the maximality of $\lambda_1(A(\Gamma))$ by Lemma \ref{o}. If $x_b>0$, then the relocation $R(v_{b},v_{s},$ $v_w)$ contradicts with the maximality of $\lambda_1(A(\Gamma))$ by Lemma \ref{o}. In fact, $d_H(u)=1$ for all $u\in N_{H}(v_1)\setminus\{v_2,v_3,v_s\}$. Thus, $(K_n,H^-)$ is switching isomorphic to $(K_n,H_1(s,t)^-)$ for $t\geq 1$. However,  $\lambda_1(A((K_n,H_1(s,t)^-)))<\lambda_1(A((K_n,D_{1,n-3}^-)))$ for $t \geq 1$ by the proof of Lemma \ref{a}. 

\begin{subcase1}
	$d_H(v_1)=3$.
\end{subcase1}

Firstly, we assert that the length of the longest path passing through $v_s$ and $v_t$ starting $v_1$ is at most $3$. Otherwise, Let  $v_{t_1}\in N_{H}(v_t)\setminus\{v_s\}$ and $v_{t_2}\in N_{H}(v_{t_1})\setminus\{v_t\}$. If $x_s\geq x_{t_1}$, then the relocation $R(v_{1},v_{t_1},$ $v_s)$ contradicts with the maximality of $\lambda_1(A(\Gamma))$ by Lemma \ref{o}. So, $x_s< x_{t_1}$.  If $x_{t_1}\geq0$, then the relocation $R(v_{t_1},v_{s},$ $v_t)$ contradicts with the maximality of $\lambda_1(A(\Gamma))$ by Lemma \ref{o}. So, $x_{t_1}<0$. If $x_{t_2} \leq 0$, then the relocation $R(v_{t_2},v_t,v_{t_1})$ contradicts with the maximality of $\lambda_1(A(\Gamma))$ by Lemma \ref{o}. Thus, $x_{t_2} >0$. However, the relocation $R(v_{t_2},v_s,v_{t_1})$ contradicts with the maximality of $\lambda_1(A(\Gamma))$ by Lemma \ref{o}, a contradiction. This implies that in the case where exists $u\in N_H(v_t)\setminus\{v_s\}$, we have $d_H({u})=1$. Similar to the above discussion, we will next consider $d_H({v_s})$.

At first, we assume that $d_H(v_s)\geq3$. If $d_H(u)=1$ for any $u\in N_{H}(v_s)\setminus\{v_1\}$, then $(K_n,H^-)$ is switching isomorphic to $(K_n,H_2(n-5,0)^-)$. However, $\lambda_1(A((K_n,H_2(s,t)^-)))<\lambda_1(A((K_n,D_{1,n-3}^-$ $)))$ by the proof of Lemma \ref{y}. If there exists at least two vertices in $N_{H}(v_s)\setminus\{v_1\}$ whose degree is greater than or equal to $2$. Without loss of generality, assume that $d_H(v_t)\geq 2$ and $d_H(v_b) \geq 2$ for $v_b\in N_{H}(v_s)\setminus\{v_1\}$. Then let $v_a \in N_{H}(v_t)\setminus\{v_s\}$ and $v_c \in N_{H}(v_b)\setminus\{v_s\}$. Firstly, we assert that $x_a<0$. Otherwise, the relocation $R(v_a,v_{s},v_t)$ contradicts with the maximality of $\lambda_1(A(\Gamma))$ by Lemma \ref{o}.
Secondly, we claim that $x_{b}>0$. Otherwise, the relocation $R(v_b,v_{t},v_s)$ contradicts with the maximality of $\lambda_1(A(\Gamma))$ by Lemma \ref{o}.
If $x_{c}\geq 0$, then the relocation  $R(v_c,v_{s},v_{b})$ contradicts with the maximality of $\lambda_1(A(\Gamma))$ by Lemma \ref{o}. So, $x_{c}< 0$. 
Finally, we also assert that $x_{t}<x_b$. Otherwise, the relocation $R(v_{c},v_t,v_{b})$ contradicts with the maximality of $\lambda_1(A(\Gamma))$ by Lemma \ref{o}. However, the relocation  $R(v_a,v_{b},v_{t})$ contradicts with the maximality of $\lambda_1(A(\Gamma))$ by Lemma \ref{o}. This implies that 
there is only one vertex in $N_{H}(v_s)\setminus\{v_1\}$ whose degree is greater than or equal to $2$. Without loss of generality, assume that  $d_H(v_t)\geq 2$. Then $d_H({u})=1$ for all $u\in N_{H}(v_t)\setminus\{v_s\}$ through the previous discussion. Thus, $(K_n,H^-)$ is switching isomorphic to $(K_n,H_2(s,t)^-)$. However, $\lambda_1(A((K_n,H_2(s,t)^-)))<\lambda_1(A((K_n,D_{1,n-3}^-)))$ by the proof of Lemma \ref{y}. Secondly, assume that $d_H(v_s)=2$.
Then $(K_n,H^-)$ is switching isomorphic to $(K_n,H_2(0,n-5)^-)$.
However, $\lambda_1(A((K_n,H_2(0,n-5)^-)))<\lambda_1(A((K_n,D_{1,n-3}^-)))$ by the proof of Lemma \ref{y}.  

Next, we consider that $H$ contains at least two $3$-length cycles. By similar arguments as in the proof of Claim \ref{cc1} of Lemma \ref{77}, there exists a vertex $u$ of $3$-length cycle such that $x_u\neq 0$. Now, we assert that $H$ contians at most two $3$-length cycles. Otherwise, we can construct a new unbalanced signed complete graph $\Gamma^{\prime}=(K_n,H_1^-)$ such that  $\lambda_1(A(\Gamma))<\lambda_1(A(\Gamma^\prime))$ by similar arguments as in the proof of Claim \ref{claim1.1} of Lemma \ref{77}, where $H_1$ is $K_{2,2}$-minor free graph and contains two $3$-length cycles. This also contradicts the maximality of $\lambda_1(A(\Gamma))$. Thus, $H$ contains exactly two $3$-length cycles. If $H \ncong  U_2$, then we can construct a new unbalanced signed complete graph $\Gamma^{\prime\prime}=(K_n,H_2^-)$ such that $\lambda_1(A(\Gamma))<\lambda_1(A(\Gamma^{\prime\prime}))$ by the proof of Claim \ref{claim1.1} of Lemma \ref{77}, where $H_2$ is $K_{2,2}$-minor free graph, $H_2\ncong U_1$ and contains only one $3$-length cycle. This also contradicts the maximality of $\lambda_1(A(\Gamma))$. If $H \cong U_2$, then $\lambda_1(A((K_n,D_{1,n-3}^-)))>\lambda_1(A((K_n,U_2^-)))$ by Lemma \ref{Lem3}, a contradiction.

Thus, $H$ must be a tree and $H\cong D_{1,n-3}$ by Lemma \ref{t}.
\end{proof}

\begin{figure}[H]
	\centering
	\ifpdf
	\setlength{\unitlength}{1bp}%
	\begin{picture}(168.07, 89.08)(0,0)
		\put(0,0){\includegraphics{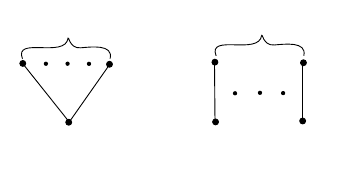}}
		\put(30.39,73.21){\fontsize{11.38}{13.66}\selectfont $s$}
		\put(123.57,74.52){\fontsize{11.38}{13.66}\selectfont $t$}
		\put(35.98,28.93){\fontsize{14.23}{17.07}\selectfont }
		\put(36.47,28.20){\fontsize{11.38}{13.66}\selectfont $v_1$}
		\put(5.67,51.00){\fontsize{11.38}{13.66}\selectfont $v_2$}
		\put(51.99,52.21){\fontsize{11.38}{13.66}\selectfont $v_{s+1}$}
		\put(81.75,54.15){\fontsize{11.38}{13.66}\selectfont $v_{s+2}$}
		\put(81.51,28.93){\fontsize{11.38}{13.66}\selectfont $v_{s+3}$}
		\put(147.28,55.12){\fontsize{11.38}{13.66}\selectfont $v_{n-1}$}
		\put(147.74,30.87){\fontsize{11.38}{13.66}\selectfont $v_n$}
		\put(25.21,1.12){\fontsize{11.38}{13.66}\selectfont $\bf{Fig.3.}$ The graph $K_{1,s}\cup tP_2$.}
	\end{picture}%
	\else
	\setlength{\unitlength}{1bp}%
	\begin{picture}(168.07, 89.08)(0,0)
		\put(0,0){\includegraphics{f4}}
		\put(30.39,73.21){\fontsize{11.38}{13.66}\selectfont s}
		\put(123.57,74.52){\fontsize{11.38}{13.66}\selectfont t}
		\put(35.98,28.93){\fontsize{14.23}{17.07}\selectfont }
		\put(36.47,28.20){\fontsize{11.38}{13.66}\selectfont 1}
		\put(5.67,51.00){\fontsize{11.38}{13.66}\selectfont 2}
		\put(51.99,52.21){\fontsize{11.38}{13.66}\selectfont s+1}
		\put(83.75,54.15){\fontsize{11.38}{13.66}\selectfont s+2}
		\put(83.51,28.93){\fontsize{11.38}{13.66}\selectfont s+3}
		\put(147.28,55.12){\fontsize{11.38}{13.66}\selectfont n-1}
		\put(148.74,30.87){\fontsize{11.38}{13.66}\selectfont n}
		\put(34.21,8.12){\fontsize{11.38}{13.66}\selectfont Fig.4. The graph .}
	\end{picture}%
	\fi
\end{figure}
\begin{lemma}\label{l10}
Let $s$ and $t$ be positive integers such that $2t+s+1=n$. Then $\lambda_1(A((K_n,D_{1,n-3}^-)))>\lambda_1(A((K_n,K_{1,n-3}^-\cup P_2^-)))\geq \lambda_1(A((K_n,K_{1,s}^-\cup tP_2^-)))$ with the second equality holds if and only if $s=n-3$, $t=1$.

\end{lemma}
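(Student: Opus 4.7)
The plan is to follow the quotient-matrix strategy developed in Lemmas~\ref{a} and~\ref{y}. Label the negative edges so that $V_1=\{v_1\}$ is the center of $K_{1,s}$, $V_2$ is the set of its $s$ leaves, and within each of the $t$ copies of $P_2$ one endpoint goes into $V_3$ and the other into $V_4$; this partition is equitable for $A((K_n,K_{1,s}^-\cup tP_2^-))$, with quotient matrix
\[
Q(s,t)=\begin{pmatrix} 0 & -s & t & t \\ -1 & s-1 & t & t \\ 1 & s & t-1 & t-2 \\ 1 & s & t-2 & t-1 \end{pmatrix}.
\]
By Lemma~\ref{l} we have $\lambda_1(A((K_n,K_{1,s}^-\cup tP_2^-)))=\lambda_1(Q(s,t))$. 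The $V_3$--$V_4$ symmetry immediately supplies the eigenvector $(0,0,1,-1)^T$ with eigenvalue $1$, so $P_{Q(s,t)}(\lambda):=\det(\lambda I-Q(s,t))$ factors as $(\lambda-1)$ times a cubic whose coefficients become linear in $s$ after using $2t+s+1=n$.

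For the right-hand inequality I would prove the monotonicity step $\lambda_1(Q(s,t))<\lambda_1(Q(s+2,t-1))$ for $t\geq 2$. The key object, as in the proofs of Lemmas~\ref{a} and~\ref{y}, is the difference $P_{Q(s,t)}(\lambda)-P_{Q(s+2,t-1)}(\lambda)$, which reduces to a low-degree polynomial in $\lambda$ since the top-degree terms cancel. I would verify that this difference is strictly positive for $\lambda$ above some explicit constant $C$ (independent of the admissible $s,t$), and separately that $P_{Q(s,t)}(n/2-1)<0$, giving $\lambda_1(Q(s,t))>n/2-1>C$. Substituting $\lambda=\lambda_1(Q(s,t))$ into $P_{Q(s+2,t-1)}$ then yields a negative value, and since $P_{Q(s+2,t-1)}$ has positive leading coefficient its largest root exceeds $\lambda_1(Q(s,t))$. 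Iterating from an arbitrary admissible $(s,t)$ with $t\geq 2$ down to $(n-3,1)$ delivers the stated inequality and shows equality occurs exactly when $(s,t)=(n-3,1)$.

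For the left-hand inequality I would compare $P_{Q(n-3,1)}(\lambda)$ directly with the polynomial $\varphi_{(K_n,D_{1,n-3}^-)}(\lambda)=(\lambda+1)^{n-3}\bigl(\lambda^3+(3-n)\lambda^2+(3-2n)\lambda+7n-23\bigr)$ recalled from \cite{12}. After padding with an appropriate power of $(\lambda+1)$ so that both sides are polynomials of the same degree, the difference should factor as $(\lambda-1)(\lambda+1)$ times a polynomial whose sign on the range $\lambda>n/2-1$ can be pinned down; combined with the bound $\lambda_1(Q(n-3,1))>n/2-1$ obtained in the previous paragraph, this will force $\varphi_{(K_n,D_{1,n-3}^-)}(\lambda_1(Q(n-3,1)))<0$, and hence $\lambda_1(A((K_n,D_{1,n-3}^-)))>\lambda_1(A((K_n,K_{1,n-3}^-\cup P_2^-)))$.

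The main obstacle is symbolic rather than conceptual: both polynomial-difference arguments generate expressions in $n$ whose signs for $n\geq 9$ must be certified by the same repeated-differentiation plus evaluation-at-$n=9$ bookkeeping used in Lemmas~\ref{Lem3}, \ref{a} and \ref{y}. One additional mild subtlety is choosing the threshold $C$ in the monotonicity step to be uniform in $(s,t)$ (for instance a quantity like $\sqrt{2n-7}$ that is bounded above by $n/2-1$ for $n\geq 9$); the parity of $s$ is preserved along the iteration $(s,t)\mapsto(s+2,t-1)$, but since every admissible $(s,t)$ terminates at $(n-3,1)$, no case split on parity is actually needed.
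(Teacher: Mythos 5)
Your setup (equitable quotient matrix, shift $(s,t)\mapsto(s+2,t-1)$, endpoint comparisons) matches the paper's strategy, and your $4\times 4$ quotient matrix is a legitimate alternative to the paper's $(t+2)\times(t+2)$ one. But the monotonicity step as you describe it cannot be carried out. With the paper's normalization the relevant cubic factor is $f_{(s,t)}(\lambda)=\lambda^3+(4-2t-s)\lambda^2+(3-4t-4s)\lambda+8st-3s-2t$, and
\[
f_{(s,t)}(\lambda)-f_{(s+2,t-1)}(\lambda)=4\lambda+8s-16t+20,
\]
which is positive only for $\lambda>4t-2s-5=2n-4s-7$ (using $s+2t=n-1$). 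For small $s$ this threshold is about $2n$, whereas $\lambda_1\leq n-1$ always; so there is no constant $C\leq \frac n2-1$, uniform in the admissible $(s,t)$, above which the difference is positive, and for $s=1$ and $n\geq 11$ the difference is in fact negative at $\lambda_1$. The paper avoids this with a two-case argument: when $4\lambda_1+8s-16t\geq 0$ it shifts toward $(n-3,1)$ as you propose; otherwise it shifts in the opposite direction, toward $s\in\{1,2\}$ (i.e.\ toward $\frac n2 P_2$ or $P_3\cup\frac{n-3}{2}P_2$), and then compares those two extremal configurations with $(n-3,1)$ directly via $f_{(1,\frac n2-1)}(\lambda)-f_{(n-3,1)}(\lambda)=(2n-8)(\lambda-1)$ and $f_{(2,\frac{n-3}{2})}(\lambda)-f_{(n-3,1)}(\lambda)=(2n-10)(\lambda+1)$. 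Your plan is missing this second branch entirely.

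A second, smaller issue: for the final comparison with $D_{1,n-3}$ the bound $\lambda_1>\frac n2-1$ is not strong enough. The difference of the two relevant cubics is $-2\lambda^2+(2n-8)\lambda+2n-6=-2(\lambda+1)\bigl(\lambda-(n-3)\bigr)$, which is still positive on the interval $(\frac n2-1,\,n-3)$, so its sign is not constant on the range you propose; you need $\lambda_1(Q(n-3,1))>n-3$, which the paper obtains from $f_{(n-3,1)}(n-3)=-2(n-4)^2<0$. Also, since the adjacency matrix here is not nonnegative, the identity $\lambda_1(A)=\lambda_1(Q)$ does not follow from Lemma \ref{l} alone: one must check that the non-quotient eigenvalues (computed via the $\alpha J$-perturbation) are dominated by $\lambda_1(Q)$, as the paper does by exhibiting the spectrum $\{-1^{[s]},0,1^{[2t]}\}$ of the perturbed matrix.
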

\begin{proof}
 We give the $A((K_n,K_{1,s}^-\cup tP_2^-))$ and its corresponding quotient matrix $Q_5(s,t)$ by the vertex partition $V_1=\{v_1\}$, $V_2=\{v_2,...,v_{s+1}\}$, $V_3=\{v_{s+2},v_{s+3}\}$, $...$ , $V_{t+2}=\{v_{n-1},v_n\}$ as follows
 \begin{center}
$A((K_n,K_{1,s}^-\cup tP_2^-))=\begin{bmatrix}0&-j_s^T&j_{n-s-1}^T\\-j_s&\big(J-I\big)_s&J_{s\times(n-s-1)}\\j_{n-s-1}&J_{s\times(n-s-1)}^T&B_{n-s-1}\end{bmatrix}$ and $Q_{5(s,t)}=\begin{bmatrix}0&-s&2j_t^T\\-1&s-1&2j_t^T\\j_t&sj_t&\left(2J-3I\right)_t\end{bmatrix}$,
 \end{center}
where 
\begin{center}
$B_{n-s-1}=\begin{bmatrix}\left(I-J\right)_2&J_{2\times2}&\cdots&J_{2\times2}\\J_{2\times2}&\left(I-J\right)_2&&\vdots\\\vdots&&\ddots&J_{2\times2}\\J_{2\times2}&\cdots&J_{2\times2}&\left(I-J\right)_2\end{bmatrix}$.
\end{center}
Thus, 
\begin{center}
$\lambda I_{t+2}-Q_5(s,t)=\begin{bmatrix}\lambda&s&-2j_t^T\\1&\lambda+1-s&-2j_t^T\\-j_t&-sj_t&\left(\left(\lambda+3\right)I-2J\right)_t\end{bmatrix}$.
\end{center}
Now, we apply finitely many elementary row and column operations on the matrix $\lambda I_{t+2}-Q_5(s,t)$. First, subtracting the $3$-th row from all the lower rows and adding the $i$-th column to the $3$-th column for $i=t+2,...,4$. This leads to the following matrix:
\begin{center}
$\lambda I_{t+2}-Q_5(s,t)=\begin{bmatrix}\lambda&s&-2t&*\\1&\lambda+1-s&-2t&*\\-1&-s&\lambda+3-2t&*\\\bf{0}&\bf{0}&\bf{0}&(\lambda+3)I_{t-1}\end{bmatrix}$.
\end{center}
Note that the characteristic polynomial of $Q_5(s,t)$ is
\begin{center}
$P_{Q_5(s,t)}(\lambda)=(\lambda+3)^{t-1}(\lambda^3+(4-2t-s)\lambda^2+(3-4t-4s)\lambda+8st-3s-2t)$.
\end{center}
Adding $\alpha $J to the blocks of $A((K_n,K_{1,s}^-\cup tP_2^-))$, where $\alpha $ is constant. Then $A((K_n,K_{1,s}^-\cup tP_2^-))$ will be 
\begin{center}
$A_5=\begin{bmatrix}0&\bf{0}&\bf{0}\\\bf{0}&-I_s&\bf{0}\\\bf{0}&\bf{0}&I_{2t}\end{bmatrix}$.
\end{center}
Since $\lambda_1(Q_5(s,t))> 1$ and Spec$(A_5)$$=\begin{Bmatrix}-1^{[s]},0,1^{[2t]}\end{Bmatrix}$, $\lambda_1(A((K_n,K_{1,s}^-\cup tP_2^-)))=\lambda_1(Q_5(s,t))$. Let $f_{(s,t)}(\lambda)=(\lambda^3+(4-2t-s)\lambda^2+(3-4t-4s)\lambda+8st-3s-2t)$, then $f_{(s,t)}(\lambda)-f_{(s+2,t-1)}(\lambda)=4\lambda+8s-16t+20$ and $f_{(s-2,t+1)}(\lambda)-f_{(s,t)}(\lambda)=4\lambda+8s-16t-12$. Assume that $\lambda_1$ is the largest root of $f_{(s,t)}(\lambda)=0$. Next, we will discuss in two cases.
\begin{case4}
$4\lambda_1+8s-16t\geq0$.
\end{case4}
Then $f_{(s,t)}(\lambda_1)-f_{(s+2,t-1)}(\lambda_1)>0$, it means that $f_{(s+2,t-1)}(\lambda_1)<0$ and $\lambda_1(Q_5(s,t))<\lambda_1(Q_5(s$ $+2,t-1))$. Thus, $\lambda_1(A((K_n,K_{1,s}^-\cup tP_2^-)))<\lambda_1(A((K_n,K_{1,s+2}^-\cup (t-1)P_2^-)))$.
By repeatedly using this operation, we can obtain that
\begin{center}
$\lambda_1(A((K_n,K_{1,s}^-\cup tP_2^-)))\leq \lambda_1(A((K_n,K_{1,n-3}^-\cup P_2^-)))$,
\end{center}
the equality holds if and only if $s=n-3$, $t=1$.
\begin{case4}\label{44}
$4\lambda_1+8s-16t<0$.
\end{case4}

Then $f_{(s-2,t+1)}(\lambda_1)-f_{(s,t)}(\lambda_1)<0$, it implies that $f_{(s-2,t+1)}(\lambda_1)<0$ and $\lambda_1(Q_5(s,t))<\lambda_1(Q_5$ $(s-2,t+1))$. Hence, $\lambda_1(A((K_n,K_{1,s}^-\cup tP_2^-)))<\lambda_1(A((K_n,K_{1,s-2}^-\cup (t+1)P_2^-)))$. By repeatedly using this operation, we have $\lambda_1(A((K_n,K_{1,s}^-\cup tP_2^-)))\leq \lambda_1(A((K_n,P_3^-\cup \frac{n-3}2P_2^-)))$ if $n$ is odd, with the equality holds if and only if $s=2$, $t=\frac{n-3}2$. And $\lambda_1(A((K_n,K_{1,s}^-$ $\cup tP_2^-)))\leq \lambda_1(A((K_n,\frac n2 P_2^-)))$ if $n$ is even, with the equality holds if and only if $s=1$, $t=\frac{n}{2}-1$.

Note that $f_{(n-3,1)}(\lambda)=\lambda^3+(5-n)\lambda^2+(11-4n)\lambda+5n-17$, $f_{(2,\frac{n-3}{2})}(\lambda)=\lambda^{3}+(5-n)\lambda^{2}+(1-2n)\lambda+7n-27$ and $f_{(1,\frac{n}{2}-1)}(\lambda)=\lambda^{3}+(5-n)\lambda^{2}+(3-2n)\lambda+3n-9$. It is worth noting that
\begin{center}
 $f_{(1,\frac{n}{2}-1)}(\lambda)-f_{(n-3,1)}(\lambda)=(2n-8)(\lambda-1)$.
\end{center}
Note that $f_{(1,\frac{n}{2}-1)}(2)=25-5n<0$ for $n\geq 9$. Let $\lambda_1$ is the largest root of $f_{(1,\frac{n}{2}-1)}(\lambda)=0$. Then $\lambda_1>1$ and $f_{(n-3,1)}(\lambda_1)<0$ for $n \geq 9$. This means that $\lambda_1(Q_5(1,\frac{n}{2}-1))<\lambda_1(Q_5(n-3,1))$. Thus, $\lambda_1(A((K_n,\frac{n}{2} P_2^-)))<\lambda_1(A((K_n,K_{1,n-3}^-\cup P_2^-)))$. As well as, note that
\begin{center} $f_{(2,\frac{n-3}{2})}(\lambda)-f_{(n-3,1)}(\lambda)=(2n-10)(\lambda+1)$.
\end{center}
Set $\lambda_1$ is the largest root of $f_{(2,\frac{n-3}{2})}(\lambda)=0$. Then $f_{(n-3,1)}(\lambda_1)<0$ for $n \geq 9$. This means that $\lambda_1(Q_5(2,\frac{n-3}{2}))<\lambda_1(Q_5(n-3,1))$. Hence, $\lambda_1(A((K_n,P_3^-\cup \frac{n-3}2P_2^-)))<\lambda_1(A((K_n,K_{1,n-3}^-\cup P_2^-)))$.

Finally, we will show that $\lambda_1(A((K_n,K_{1,n-3}^-\cup P_2)))<\lambda_1(A((K_n,D_{1,n-3}^-)))$. 
By \cite{12}, the characteristic polynomial of $A((K_n,D_{1,n-3}^-))$ is $(\lambda+1)^{n-3}(\lambda^3+(3-n)\lambda^2+(3-2n)\lambda+7n-23)$, and set $g(\lambda)=\lambda^3+(3-n)\lambda^2+(3-2n)\lambda+7n-23$. Note that $f_{(n-3,1)}(\lambda)=\lambda^3+(5-n)\lambda^2+(11-4n)\lambda+5n-17$ and $f_{(n-3,1)}(n-3)=-2(n-4)^2<0$ for $n\geq 9$. Let $\lambda_1$ is the largest root of $f_{(n-3,1)}(\lambda)=0$, then $\lambda_1>n-3$. Set,
\begin{center}
$h(\lambda)=g(\lambda)-f_{(n-3,1)}(\lambda)=-2\lambda^2+(2n-8)\lambda+2n-6$,
\end{center}
and the maximal solution of $h(\lambda)=0$ is $n-3$ for $n \geq 9$. This means that $g(\lambda_1)<0$, then $\lambda_1(A((K_n,K_{1,n-3}^-\cup P_2)))<\lambda_1(A((K_n,D_{1,n-3}^-)))$.

So, the proof is completed.
\end{proof}
\begin{lemma}\label{l11}
Let $H$ be a disconnected $K_{2,2}$-minor free spanning subgraph of $K_n$ for $n \geq 9$. If $(K_n,H^-)$ has the maximum index, then $\lambda_1(A((K_n,D_{1,n-3}^-)))>\lambda_1(A((K_n,H^-)))$.
\end{lemma}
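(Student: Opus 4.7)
My plan is to reduce the problem to the regime covered by Lemma~\ref{l10}: namely, to argue that for the extremal disconnected $H$, one must have $H \cong K_{1,s} \cup tP_2$ (possibly together with isolated vertices), after which Lemma~\ref{l10} delivers the stated inequality.

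Because $H$ is $K_{2,2}$-minor free, every $2$-connected block of each component is either a single edge or a triangle, so each component is a ``triangle cactus''. Let $X=(x_1,\ldots,x_n)^T$ be a unit eigenvector of $A((K_n,H^-))$ corresponding to $\lambda_1$. The first main task is to rule out triangles in any component of an extremal $H$. Mimicking the case analyses in Claims~1.1 and~1.2 of Lemma~\ref{l9}, I would suppose some component of $H$ contains a triangle $C_3 = v_1v_2v_3v_1$ and split the discussion according to the signs and relative magnitudes of $x_1, x_2, x_3$ together with the eigenvector entries of vertices in the remaining components. In each case, either a sign-reversing operation permitted by Lemma~\ref{o} produces a strictly larger $\lambda_1$ (contradicting extremality), or the resulting signed graph coincides with one of $(K_n, U_2^-)$, $(K_n, H_1(s,t)^-)$, or $(K_n, H_2(s,t)^-)$; by Lemmas~\ref{Lem3}, \ref{a}, and~\ref{y} each of these is already dominated by $(K_n, D_{1,n-3}^-)$, and we are done.

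Once $H$ is known to be a forest, the next step is to show every component is a star. The leaf-relocation arguments used throughout Lemma~\ref{l9} (where a tree with an internal path of length $\geq 2$ admits a leaf-moving operation that does not decrease $\lambda_1$, with strict increase unless the eigenvector entries have a very special form) force each component to be some $K_{1,s_i}$. Finally, if two components are stars $K_{1,a}$ and $K_{1,b}$ with $a, b \geq 2$, then part~(ii) of Lemma~\ref{o} allows a leaf to be transferred from one star to the other without decreasing $\lambda_1$; iterating pushes all but one of the stars down to $P_2$, so $H \cong K_{1,s} \cup tP_2$ for some $s,t\geq 1$, and Lemma~\ref{l10} closes the proof.

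The main obstacle is the first task: the exhaustive case analysis around the triangle, especially when some triangle vertex or its neighbour in another component has zero eigenvector entry, requires the simultaneous and careful use of both parts of Lemma~\ref{o}. A useful observation that will simplify the verification is that every sign-reversing move we apply only reverses signs of edges already in $H$, so the modified negative-edge-induced subgraph remains a subgraph of $H$ and hence stays $K_{2,2}$-minor free. The subsequent reductions to stars and then to $K_{1,s}\cup tP_2$ should be largely routine by comparison.
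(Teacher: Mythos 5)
Your plan diverges from the paper's proof in a way that leaves a genuine gap. The paper splits on the sign pattern of the unit eigenvector $X$: when $X$ can be taken strictly positive, reversing a negative edge of a triangle or of a $P_4$ strictly raises the index, which does force $H\cong K_{1,s}\cup tP_2$ and lets Lemma~\ref{l10} finish; but when $X$ has a zero entry or entries of both signs, the paper makes no attempt to analyse the internal structure of the components. Instead it reverses a suitable \emph{positive} edge $uv$ joining two different components, chosen so that $x_ux_v\le 0$ (whence $\lambda_1$ does not decrease, with strictness supplied by the eigenvalue equation as in Lemma~\ref{77}); this first forces $k=2$ and then produces a \emph{connected} $K_{2,2}$-minor free negative subgraph $H_1\ncong U_1$ of strictly larger index, so Lemma~\ref{l9} closes the argument. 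Your route has no substitute for this merging step, and it is needed: the assertion that leaf relocation forces every tree component to be a star is exactly where your argument breaks down, because $D_{1,n-3}$ itself contains an internal $P_3$ and is the connected maximizer, so its eigenvector sign pattern defeats every within-component relocation. A component resembling $D_{1,k}$ whose restricted eigenvector has the analogous sign pattern cannot be improved by moving leaves inside that component; ruling it out requires interacting with the other components, i.e.\ precisely the cross-component edge flip you never use. The same difficulty afflicts your triangle-elimination step when a whole component carries zero or mixed-sign entries.

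Two further points. Your ``useful observation'' that every move only reverses signs of edges already in $H$ is false: the relocation $R(r,s,t)$ of Lemma~\ref{o}(i) and the moves of Lemma~\ref{o}(ii) each turn a positive edge negative, i.e.\ they \emph{add} an edge to the negative-edge-induced subgraph, so the result is not a subgraph of $H$ and $K_{2,2}$-minor-freeness must be verified in every application (for the paper's cross-component flip this is immediate, since joining two components creates no new cycle). Finally, Lemma~\ref{l10} is stated only for $2t+s+1=n$, so if you allow isolated vertices in $H$, as your first paragraph does, you need an additional argument to dispose of them before that lemma applies.
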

 \begin{proof}
 Let $\Gamma=(K_n,H^-)$ and $G_1,...,G_k$ be $k$ connected components of $H$ for $k \geq 2$. Cleary, $G_i$ is either a tree or a connected graph that only contains $3$-length cycles for $1 \leq i \leq k$. Let $X=(x_1,x_2,...,x_n)^T$ be a unit eigenvector associated with  $\lambda_1(A(\Gamma))$. Note that $-X$ must be a unit eigenvector of $\Gamma $ if $X$ is a unit eigenvector. Next, we will divide into the following two cases.
  \begin{case3}\label{C11}
There exists a nonnegative eigenvector.
 \end{case3}
 \noindent{\bf{${\mbox{subcase 1.1. }}$}} $x_i>0$ for any $1\leq i\leq n$. Firstly, we assert that $G_j$ is either a star graph or a $P_2$ for $1 \leq j\leq k$. Otherwise, we can find a $3$-length cycle or a $P_4$ as a subgraph of $H$. If $C_3=v_1v_2v_3v_1$ is a $3$-length cycle of $H$, then we can construct a new unbalanced signed complete graph $\Gamma^{\prime}$ whose negative edge-induced subgraph is still $K_{2,2}$-minor free by reversing the sign of the negative edge $v_{1}v_{2}$ such that $\lambda_1(A(\Gamma))<\lambda_1(A(\Gamma^\prime))$, a contradiction. If $P_4=v_4v_5v_6v_7$ is a subgraph of $H$, then we can construct a new unbalanced signed complete graph $\Gamma^{\prime}$ whose negative edge-induced subgraph is still $K_{2,2}$-minor free by reversing the sign of the negative edge $v_{5}v_{6}$ such that $\lambda_1(A(\Gamma))<\lambda_1(A(\Gamma^\prime))$, a contradiction. First, we consider that $H$ contains star subgraphs. Then we assert that $H$ contains only one star subgraph. Otherwise, without loss of generality, assume that  $F_1$ and $F_2$ are star subgraphs of $H$ with $v_1$and $v_3$ as their central vertices, respectively. Let $v_1v_2 \in E(F_1)$ and $v_3v_4 \in E(F_2)$. If $x_1 \geq x_3$, then the relocation $R(v_{2},v_{3},v_1)$ contradicts with the maximality of $\lambda_1(A(\Gamma))$ by Lemma \ref{o},  a contradiction. If $x_1 < x_3$, then the relocation $R(v_{4},v_{1},v_3)$ contradicts with the maximality of $\lambda_1(A(\Gamma))$ by Lemma \ref{o},  a contradiction. This means that $H \cong K_{1,s}^-\cup tP_2^-$, where $s\geq 2$ and $t\geq 1$. Thus, $\lambda_1(A((K_n,D_{1,n-3}^-)))>\lambda_1(A((K_n,K_{1,s}^-\cup tP_2^-)))$ by Lemma \ref{l10}. If $H$ does not contain star subgraphs. Then $H\cong \frac{n}{2}P_2$. Hence, $\lambda_1(A((K_n,D_{1,n-3}^-)))>\lambda_1(A((K_n,\frac{n}{2}P_2^-)))$ by the proof of Case \ref{44} of Lemma \ref{l10}.

  \noindent{\bf{${\mbox{subcase 1.2. }}$}} If there exists at least one vertex $v_i \in V(H)$ such that $x_i=0$. Without loss of generality, assume that $u \in V(G_1)$ such that $x_u=0$. Next, we assert that $k=2$. Otherwise, if there exists one vertex $v \in V(H)\backslash V(G_1)$ such that $x_v>0$, then we can construct a new unbalanced signed complete graph $\Gamma^{\prime}=(K_n,H_1^-)$ by reversing the sign of the positive edge $uv$ such that $\lambda_1(A(\Gamma))<\lambda_1(A(\Gamma^\prime))$, where $H_1$ is a disconnected $K_{2,2}$-minor free graph. This contradicts the maximality of $\lambda_1(A(\Gamma))$. If $x_v=0$ for any $v \in V(H)\backslash V(G_1)$, then there exists at least one vertex $w \in V(G_1)$ such that $x_w > 0$ since $X\neq \bf{0}$. Let $v\in V(H)\backslash V(G_1)$, then we can construct a new unbalanced signed complete graph $\Gamma^{\prime\prime}=(K_n,H_2^-)$ by reversing the sign of the positive edge $wv$ such that $\lambda_1(A(\Gamma))<\lambda_1(A(\Gamma^{\prime\prime}))$, where $H_2$ is a disconnected $K_{2,2}$-minor free graph. This also contradicts the maximality of $\lambda_1(A(\Gamma))$. Hence, $k=2$. That is, $H=G_1 \cup G_2$. If there exists one vertex $v \in V(G_2)$ such that $x_v>0$, then we can construct a new unbalanced signed complete graph $\Gamma_1=(K_n,H_1^-)$ by reversing the sign of the positive edge $uv$ such that $\lambda_1(A(\Gamma))<\lambda_1(A(\Gamma_1))$, where $H_1$ is a connected $K_{2,2}$-minor free graph and $H_1 \ncong U_1$. However, $\lambda_1(A((K_n,D_{1,n-3}^-)))\geq  \lambda_1(A(\Gamma_1))$ with the equality holds if and only if $H_1\cong D_{1,n-3}$ by Lemma \ref{l9}. Thus, $\lambda_1(A((K_n,D_{1,n-3}^-)))>\lambda_1(A((K_n,H^-)))$. If $x_i=0$ for any $v_i \in V(G_2)$, then there exists at least one vertex $w \in V(G_1)$ such that $x_w >0$ since $X\neq \bf{0}$. Let $v \in V(G_2)$, then we can construct a new unbalanced signed complete graph $\Gamma_2=(K_n,H_2^-)$ by reversing the sign of the positive edge $wv$ such that $\lambda_1(A(\Gamma))<\lambda_1(A(\Gamma_2))$, where $H_2$ is a connected $K_{2,2}$-minor free graph and $H_2 \ncong U_1$. However, $\lambda_1(A((K_n,D_{1,n-3}^-)))\geq  \lambda_1(A(\Gamma_2))$ with the equality holds if and only if $H_2\cong D_{1,n-3}$ by Lemma \ref{l9}. Thus, $\lambda_1(A((K_n,D_{1,n-3}^-)))>\lambda_1(A((K_n,H^-)))$.

 \begin{case3}
There are no nonnegative eigenvectors.
 \end{case3}
Without loss of generality, assume that $u$, $v\in V(H)$ such that $x_u\leq 0$, $x_v> 0$. Then we assert that $k=2$. Otherwise, we first assume that vertices $u$ and $v$ are in the same connected component, without loss of generality, we assume that $u$, $v\in V(G_1)$ and $w\in V(G_2)$. If $x_w>0$, then we can construct a new unbalanced signed complete graph $\Gamma^{\prime}=(K_n,H_1^-)$ by reversing the sign of the positive edge $uw$ such that $\lambda_1(A(\Gamma))<\lambda_1(A(\Gamma^\prime))$, where $H_1$ is a disconnected $K_{2,2}$-minor free graph. This contradicts the maximality of $\lambda_1(A(\Gamma))$. If $x_w\leq 0$, then we can construct a new unbalanced signed complete graph $\Gamma^{\prime\prime}=(K_n,H_2^-)$ by reversing the sign of the positive edge $vw$ such that $\lambda_1(A(\Gamma))<\lambda_1(A(\Gamma^{\prime\prime}))$, where $H_2$ is a disconnected $K_{2,2}$-minor free graph. This also contradicts the maximality of $\lambda_1(A(\Gamma))$. So, $u$ and $v$ are not in the same connected component, then we can construct a new unbalanced signed complete graph $\Gamma^{\prime\prime\prime}=(K_n,H_3^-)$ by reversing the sign of the positive edge $uv$ such that $\lambda_1(A(\Gamma))<\lambda_1(A(\Gamma^{\prime\prime\prime}))$, where $H_3$ is a disconnected $K_{2,2}$-minor free graph. This also contradicts the maximality of $\lambda_1(A(\Gamma))$. Thus, $k=2$. That is, $H=G_1 \cup G_2$. Let $u$, $v\in V(G_1)$ and $w\in V(G_2)$. If $x_w>0$, then we can construct a new unbalanced signed complete graph $\Gamma_1=(K_n,H_1^-)$ by reversing the sign of the positive edge $uw$ such that $\lambda_1(A(\Gamma))<\lambda_1(A(\Gamma_1))$, where $H_1$ is a connected $K_{2,2}$-minor free graph and $H_1 \ncong U_1$. However, $\lambda_1(A((K_n,D_{1,n-3}^-)))\geq  \lambda_1(A(\Gamma_1))$ with the equality holds if and only if $H_1\cong D_{1,n-3}$ by Lemma \ref{l9}. Thus, $\lambda_1(A((K_n,D_{1,n-3}^-)))>\lambda_1(A((K_n,H^-)))$. If $x_w\leq 0$, then we can construct a new unbalanced signed complete graph $\Gamma_2=(K_n,H_2^-)$ by reversing the sign of the positive edge $vw$ such that $\lambda_1(A(\Gamma))<\lambda_1(A(\Gamma_2))$, where $H_2$ is a connected $K_{2,2}$-minor free graph and $H_2 \ncong U_1$. However, $\lambda_1(A((K_n,D_{1,n-3}^-)))\geq  \lambda_1(A(\Gamma_2))$ with the equality holds if and only if $H_2\cong D_{1,n-3}$ by Lemma \ref{l9}. So, $\lambda_1(A((K_n,D_{1,n-3}^-)))>\lambda_1(A((K_n,H^-)))$. If $u\in V(G_1)$ and $v\in V(G_2)$, then we can construct a new unbalanced signed complete graph $\Gamma_3=(K_n,H_3^-)$ by reversing the sign of the positive edge $uv$ such that $\lambda_1(A(\Gamma))<\lambda_1(A(\Gamma_3))$, where $H_3$ is a connected $K_{2,2}$-minor free graph and $H_3 \ncong U_1$. However, $\lambda_1(A((K_n,D_{1,n-3}^-)))\geq  \lambda_1(A(\Gamma_3))$ with the equality holds if and only if $H_3\cong D_{1,n-3}$ by Lemma \ref{l9}. Hence, $\lambda_1(A((K_n,D_{1,n-3}^-)))>\lambda_1(A((K_n,H^-)))$.
 
So, the proof is completed.
\end{proof}
\noindent{\bf{Proof of Theorem \ref{thm1}}.} For $5\leq n\leq 8$, we can conclude that Theorem \ref{thm1} holds through direct calculation. For $n\geq 9$,
by Lemmas \ref{77}, \ref{l9} and \ref{l11}, the Theorem \ref{thm1} holds directly.\\\\
{\bf{Author Contributions}} All authors jointly worked on the results and they read and approved the final manuscript.\\\\
{\bf{Data Availability}} Not applicable. The manuscript has no associated data.
\section*{Acknowledgements}
The authors would like to show great gratitude to anonymous referees for their valuable suggestions which lead to an improvement of the original manuscript.
\section*{Declarations}

\noindent{\bf{Conflict of interest}} The authors declare that they have no conflict of interest.

\end{document}